\newcommand{\rd}{\,\mathrm{d}}
\newcommand{\bsx}{\boldsymbol{x}}
\newcommand{\bsy}{\boldsymbol{y}}
\newcommand{\bsl}{\boldsymbol{l}}
\newcommand{\bsk}{\boldsymbol{k}}
\newcommand{\bst}{\boldsymbol{t}}
\newcommand{\bszero}{\boldsymbol{0}}
\newcommand{\RR}{\mathbb{R}}
\newcommand{\LL}{{\cal L}}
\newcommand{\FF}{\mathbb{F}}
\newcommand{\NN}{\mathbb{N}}
\newcommand{\wal}{{\rm wal}}
\newcommand{\cP}{\mathcal{P}}
\newcommand{\cS}{\mathcal{S}}
\title{Explicit constructions of point sets and sequences with low discrepancy}
\abstract{In this article we survey recent results on the explicit construction of finite point sets and infinite sequences with optimal order of $\LL_q$ discrepancy. In 1954 Roth proved a lower bound for the $\LL_2$ discrepancy of finite point sets in the unit cube of arbitrary dimension. Later various authors extended Roth's result to lower bounds also for the $\LL_q$ discrepancy and for infinite sequences. While it was known already from the early 1980s on that Roth's lower bound is best possible in the order of magnitude, it was a longstanding open question to find explicit constructions of point sets and sequences with optimal order of $\LL_2$ discrepancy. This problem was solved by Chen and Skriganov in 2002 for finite point sets and recently by the authors of this article for infinite sequences. These constructions can also be extended to give optimal order of the $\LL_q$ discrepancy of finite point sets for $q \in (1,\infty)$. The main aim of this article is to give an overview of these constructions 
and related results.
}
\keywords{$\LL_q$ discrepancy, explicit constrictions, digital nets and sequences}
\begin{document}
\section{Introduction}

We consider equidistribution properties of sequences in the
$s$-dimensional unit-cube $[0,1)^s$ measured by their $\LL_q$ discrepancy (see \cite{BC, DP10, DT97,
kuinie,mat}). For a finite set $\cP_{N,s} =\{\bsx_0,\ldots
,\bsx_{N-1}\}$ of points in the $s$-dimensional unit-cube $[0,1)^s$
the local discrepancy function is defined as
$$\Delta_{\cP_{N,s}}(\bst)=\frac{A_N([\bszero,\bst),\cP_{N,s})}{N}- t_1 t_2 \cdots t_s,$$ where $\bst = (t_1,t_2,\ldots, t_s) \in [0,1]^s$ and
$A_N([\bszero,\bst), \cP_{N,s})$ denotes the number of indices $n$ with
$\bsx_n \in [0,t_1)\times \dots \times [0,t_s) =: [\bszero, \bst)$. The local discrepancy function measures the difference of the portion of points in an axis parallel box containing the origin and the volume of this box. Hence it is a measure of the irregularity of distribution of a point set in $[0,1)^s$.

\begin{definition}
Let $q \in [1,\infty]$. The {\it $\LL_q$ discrepancy} of $\cP_{N,s}$ is defined as the $\LL_q$-norm of the local discrepancy function
\begin{eqnarray}\label{Lq-definition}
\LL_{q,N}(\cP_{N,s})=\| \Delta_{\cP_{N,s}}\|_{\LL_q}= \left(\int_{[0,1]^{s}} |\Delta_{\cP_{N,s}}(\bst)|^q \rd\bst\right)^{1/q}
\end{eqnarray}
with the obvious modifications for $q=\infty$. For an infinite sequence $\cS_s$ in $[0,1)^s$ the $\LL_q$ discrepancy $\LL_{q,N}(\cS_s)$ is the $\LL_q$ discrepancy of the point set consisting of the first $N$ elements of $\cS_s$.
\end{definition}

It is well known that a sequence is uniformly distributed modulo one if and only if its $\LL_q$ discrepancy tends to zero for growing $N$. Furthermore, the $\LL_q$ discrepancy can also be linked to the integration error of a quasi-Monte Carlo rule, see, e.g. \cite{DP10,Nie73,SloWo} for the error in the worst-case setting and \cite{Woz} for the average case setting.

One of the questions on irregularities of distribution is concerned with the precise order of convergence of the smallest possible values of the $\LL_q$ discrepancy as $N$ goes to infinity. While this problem is completely solved for $q \in (1,\infty)$, the case $q \in \{1,\infty\}$ appears to be much more difficult. In particular, for $q=\infty$ and $s \ge 3$ the exact asymptotic order of the smallest possible value of the $\LL_{\infty}$ discrepancy is still unknown (for $s=2$ it is known to be $(\log N)/N$). There are many people who conjecture that the sharp order of magnitude for the smallest possible value of the $\LL_{\infty}$ discrepancy of $N$-element point sets in $[0,1)^s$ is $(\log N)^{s-1}/N$. But there are also other opinions such as, for example, $(\log N)^{s/2}/N$, see \cite{BilLac}. Although there is some recent remarkable progress, which is surveyed in \cite{BilLac}, the exact determination of the sharp order of magnitude for the smallest possible value of $\LL_{\infty}$ discrepancy remains
unknown.

In this paper we only deal with the case $q \in (1,\infty)$ and survey the development of the problem beginning with Roth's seminal lower bound on the $\LL_2$ discrepancy from 1954 (Section~\ref{sec_lb}) to the recent constructions of point sets and sequences with optimal order of $\LL_2$ discrepancy (Section~\ref{sec_ub}), on which we put our main focus. In detail, in Section~\ref{sec_lb} we discuss Roth's lower bound for the $\LL_2$ discrepancy of finite point sets and its extensions to $\LL_q$ discrepancy and to infinite sequences. Section~\ref{sec_ub} deals with existence results for point sets and sequences whose orders of magnitude of the $\LL_2$ discrepancy match the lower bounds. Sections~\ref{sec_DC} introduces digital nets and sequences which provide the basic ideas for the explicit constructions. Section~\ref{sec_walsh} introduces Walsh functions, which is the main analytical tool to obtain discrepancy bounds. This section also motivates the ideas for the latter constructions. Section~\ref{sec_CS} 
introduces the explicit constructions of point sets with optimal $\LL_2$ discrepancy by Chen and Skriganov~\cite{CS02}, whereas Section~\ref{sec_DP} introduces the explicit constructions of point sets and sequences with optimal $\LL_2$ discrepancy of \cite{DP13}. Section~\ref{sec_lq} describes the extensions of the latter results to the $\LL_q$ discrepancy. Section~\ref{sec_orlicz} briefly discusses recent extensions of the $\LL_q$ discrepancy results to exponential Orlicz norms.

We close this introduction with some notation that is used throughout this survey: Let $\NN$ be the set of positive integers and let $\NN_0=\NN \cup \{0\}$.  Furthermore let $b$ be a prime number and let $\FF_b$ be the finite field of order $b$ identified with the set $\{0,1,\ldots,b-1\}$ equipped with arithmetic operations modulo $b$. For functions $g,f: \NN \rightarrow \RR$, $f>0$, we write  $g(N) \ll f(N)$ if there is a constant $c > 0$ such that $g(N) \le c f(N)$ for all $N \in \NN$. If the implied constant $c$ depends on some parameters, say, for example, $s$, we denote this by $g(N) \ll_s f(N)$.

\section{Lower bounds}\label{sec_lb}

In 1954 Roth~\cite{Roth} proved a celebrated general lower bound on the $\LL_2$ discrepancy of finite point sets.

\begin{theorem}[Roth, 1954]\label{thm1}
For every dimension $s \in \NN$ there exists a real $c_s >0$ with the following property: For any integer $N \ge 2$ and any $N$-element point set $\cP_{N,s}$ in $[0,1)^s$ we have $$\LL_{2,N}(\cP_{N,s}) \ge c_s \frac{(\log N)^{\frac{s-1}{2}}}{N}.$$
\end{theorem}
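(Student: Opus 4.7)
The plan is to follow Roth's original approach: a test-function argument using tensor products of dyadic Haar (Rademacher-type) step functions together with Cauchy--Schwarz. For each dyadic interval $I_{r,k} = [k\cdot 2^{-r},(k+1)\cdot 2^{-r})$ with $r\in\NN_0$ and $0 \le k < 2^r$, I would introduce $h_{r,k}(t)$ that equals $+1$ on the left half of $I_{r,k}$, $-1$ on the right half, and $0$ elsewhere. For a multi-index $\mathbf{r} = (r_1,\ldots,r_s) \in \NN_0^s$ and $\bsk$ with $0 \le k_j < 2^{r_j}$, set $B_{\mathbf{r},\bsk} = \prod_{j=1}^s I_{r_j,k_j}$ and $h_{\mathbf{r},\bsk}(\bst) = \prod_{j=1}^s h_{r_j,k_j}(t_j)$. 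These functions are pairwise orthogonal in $\LL_2([0,1]^s)$ with $\|h_{\mathbf{r},\bsk}\|_{\LL_2}^2 = 2^{-|\mathbf{r}|}$, where $|\mathbf{r}| := r_1+\cdots+r_s$.

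The key computation I would do first is to show that whenever the box $B_{\mathbf{r},\bsk}$ contains \emph{no} point of $\cP_{N,s}$, one has $|\langle \Delta_{\cP_{N,s}}, h_{\mathbf{r},\bsk}\rangle_{\LL_2}| = 4^{-s}\, 2^{-2|\mathbf{r}|}$. This splits as usual: the linear volume term $t_1\cdots t_s$ factors and gives $\prod_j \int_0^1 t\, h_{r_j,k_j}(t)\rd t = \prod_j(-\tfrac{1}{4}2^{-2r_j})$, while the counting term $A_N/N$ contributes nothing because, for any fixed point $\bsx$, the factor $\int_{x_j}^1 h_{r_j,k_j}(t_j)\rd t_j$ vanishes unless $x_j \in I_{r_j,k_j}$, i.e.\ unless $\bsx \in B_{\mathbf{r},\bsk}$.

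Next I would choose $n\in\NN$ with $2^{n-1} < 2N \le 2^n$, so that $n \asymp \log N$. For every $\mathbf{r}$ with $|\mathbf{r}| = n$, the collection $\{B_{\mathbf{r},\bsk}\}_\bsk$ partitions $[0,1)^s$ into $2^n \ge 2N$ boxes, and pigeonhole forces at least $N$ of them to be empty. Picking a set $\cN(\mathbf{r})$ of exactly $N$ empty boxes for each such $\mathbf{r}$, I form the sign-adjusted test function
\begin{equation*}
F(\bst) := \sum_{|\mathbf{r}|=n}\;\sum_{\bsk:\, B_{\mathbf{r},\bsk}\in\cN(\mathbf{r})} \varepsilon_{\mathbf{r},\bsk}\, h_{\mathbf{r},\bsk}(\bst), \qquad \varepsilon_{\mathbf{r},\bsk} := \mathrm{sign}\,\langle \Delta_{\cP_{N,s}}, h_{\mathbf{r},\bsk}\rangle.
\end{equation*}
Applying the key computation to every summand, $\langle \Delta_{\cP_{N,s}}, F\rangle \ge \binom{n+s-1}{s-1}\cdot N\cdot 4^{-s}\, 2^{-2n} \gg_s n^{s-1}/N$; by orthogonality, $\|F\|_{\LL_2}^2 = \binom{n+s-1}{s-1}\cdot N\cdot 2^{-n} \ll_s n^{s-1}$. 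Cauchy--Schwarz then yields
\begin{equation*}
\LL_{2,N}(\cP_{N,s}) = \|\Delta_{\cP_{N,s}}\|_{\LL_2} \ge \frac{\langle\Delta_{\cP_{N,s}},F\rangle}{\|F\|_{\LL_2}} \gg_s \frac{n^{(s-1)/2}}{N} \asymp \frac{(\log N)^{(s-1)/2}}{N}.
\end{equation*}

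The proof has no deep analytic obstacle; the real subtlety is the \emph{choice of the correct frequency level} and the realisation that one should sum over \emph{all} $\mathbf{r}$ with $|\mathbf{r}|=n$ simultaneously. Taking $n$ with $2^n$ just above $2N$ is what makes pigeonhole yield many empty boxes at every such level, and summing over the $\asymp_s n^{s-1}$ multi-indices $\mathbf{r}$ of total level $n$ exploits the orthogonality of independent signals of comparable size. The numerator of the Cauchy--Schwarz quotient grows linearly in the number of summands while the denominator grows only as its square root, producing the gap of $n^{(s-1)/2}$ that supplies the $(\log N)^{(s-1)/2}$ factor in the final bound.
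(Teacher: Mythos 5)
Your argument is correct and is essentially the classical proof of Roth that the paper cites rather than reproduces: the Haar-type test function supported on empty dyadic boxes at total level $n$ with $2^n \asymp N$, the exact value $4^{-s}2^{-2|\mathbf{r}|}$ of the inner product on empty boxes, orthogonality, and Cauchy--Schwarz. No gaps; this matches the referenced standard approach.
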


Roth's original proof can be found in \cite{Roth}. Further proofs are presented in \cite{BC,DP10,DT97,HM,kuinie,mat}. According to \cite{HM} the quantity $c_s$ can be chosen as $$c_s= \frac{7}{27 \cdot 2^{2s-1} (\log 2)^{(s-1)/2} \sqrt{(s-1)!}}.$$ From the monotonicity of the $\LL_q$ norm it is evident that Roth's lower bound also holds for the $\LL_q$ discrepancy for any $q \ge 2$. Furthermore, it was shown by Schmidt~\cite{schX} that also for any $q > 1$ there exists a real $c_{s,q}>0$ with the property that
\begin{equation}\label{extsch}
\LL_{q,N}(\cP_{N,s}) \ge c_{s,q} \frac{(\log N)^{\frac{s-1}{2}}}{N}
\end{equation}
for any $N$-element point set $\cP_{N,s}$ in $[0,1)^s$.

In 1986 Proinov~\cite{pro86} extended Roth's lower bound to infinite sequences.

\begin{theorem}[Proinov, 1986]\label{thm2}
For every dimension $s \in \NN$ and any $q>1$ there exists a real $\alpha_{s,q} >0$ with the following property:  For any infinite sequence $\cS_s$ in $[0,1)^s$ we have $$\LL_{q,N}(\cS_{s}) \ge \alpha_{s,q} \frac{(\log N)^{\frac{s}{2}}}{N} \ \ \ \ \mbox{ for infinitely many }\ N \in \NN.$$
\end{theorem}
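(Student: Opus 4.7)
The plan is to reduce Proinov's infinite-sequence bound in dimension $s$ to Schmidt's finite-point-set lower bound \eqref{extsch} in dimension $s+1$, via the classical trick of appending a time coordinate. Given an infinite sequence $\cS_s=(\bsx_n)_{n\ge 0}$ in $[0,1)^s$, I would form the auxiliary finite point set
$$\cP_{N,s+1}:=\{(\bsx_n,n/N)\colon 0\le n\le N-1\}\subset[0,1)^{s+1}.$$
For $u\in((m-1)/N,m/N]$ and $\bst\in[0,1]^s$, counting the elements of $\cP_{N,s+1}$ in $[\bszero,\bst)\times[0,u)$ reduces to counting the first $m$ terms of $\cS_s$ lying in $[\bszero,\bst)$. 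Writing $\cP_{m,s}:=\{\bsx_0,\ldots,\bsx_{m-1}\}$, a short arithmetic check then gives the identity
$$\Delta_{\cP_{N,s+1}}(\bst,u)=\frac{m}{N}\,\Delta_{\cP_{m,s}}(\bst)+\left(\frac{m}{N}-u\right) t_1 t_2\cdots t_s.$$

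Next I would raise this identity to the $q$-th power using the convexity inequality $|a+b|^q\le 2^{q-1}(|a|^q+|b|^q)$, integrate over $[0,1]^s\times((m-1)/N,m/N]$, and sum on $m=1,\ldots,N$, which yields
$$\LL_{q,N}(\cP_{N,s+1})^q\le\frac{2^{q-1}}{N}\sum_{m=1}^N\left(\frac{m}{N}\right)^q\LL_{q,m}(\cS_s)^q+O(N^{-q}).$$
Because $\cP_{N,s+1}$ is a point set in $[0,1)^{s+1}$, Schmidt's estimate \eqref{extsch} delivers the matching lower bound $\LL_{q,N}(\cP_{N,s+1})^q\ge c_{s+1,q}^q(\log N)^{qs/2}/N^q$; the exponent has jumped from $(s-1)/2$ to $s/2$ precisely because the ambient dimension was raised by one. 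Absorbing the $O(N^{-q})$ error for $N$ sufficiently large, I obtain
$$\sum_{m=1}^N\left(\frac{m}{N}\right)^q\LL_{q,m}(\cS_s)^q\gg_{s,q}\frac{(\log N)^{qs/2}}{N^{q-1}}.$$

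To extract the \emph{infinitely many $N$} conclusion, I would argue by contradiction. Suppose that for some $\varepsilon>0$ and $m_0\in\NN$ one has $\LL_{q,m}(\cS_s)<\varepsilon(\log m)^{s/2}/m$ for every $m\ge m_0$. The contribution of the terms $m<m_0$ to the sum above is $O_{m_0}(N^{-q})$, negligible relative to the target. The remaining tail satisfies
$$\sum_{m=m_0}^N\left(\frac{m}{N}\right)^q\frac{\varepsilon^q(\log m)^{qs/2}}{m^q}\le\varepsilon^q\,\frac{(\log N)^{qs/2}}{N^{q-1}},$$
so choosing $\varepsilon$ strictly smaller than the implied constant of the previous display contradicts the established lower bound for all sufficiently large $N$. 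Hence $\LL_{q,N}(\cS_s)\ge\alpha_{s,q}(\log N)^{s/2}/N$ must hold for infinitely many $N$ with a suitable $\alpha_{s,q}>0$.

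The substantive input is Schmidt's $\LL_q$ extension \eqref{extsch} of Roth's theorem applied in dimension $s+1$; the rest is essentially bookkeeping. The main step that demands care is the derivation of the explicit formula for $\Delta_{\cP_{N,s+1}}(\bst,u)$ on each horizontal slab $((m-1)/N,m/N]$, together with tracking the constants through the convexity step so that the residual is genuinely of size $o((\log N)^{qs/2}/N^q)$ and does not swallow the main term.
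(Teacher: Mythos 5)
Your proof is correct, and it rests on the same central device as the paper: append the extra coordinate $n/N$ to form the $(s+1)$-dimensional point set $\cP_{N,s+1}$, use the exact identity $\Delta_{\cP_{N,s+1}}(\bst,u)=\tfrac{m}{N}\Delta_{\cP_{m,s}}(\bst)+(\tfrac{m}{N}-u)\,t_1\cdots t_s$ on the slab $u\in((m-1)/N,m/N]$ (the paper writes the same identity with $m=\lceil Nt_{s+1}\rceil$), and feed in Schmidt's bound \eqref{extsch} in dimension $s+1$, which is exactly where the exponent jumps from $(s-1)/2$ to $s/2$. Where you genuinely diverge is in how the finite bound is transferred back to the sequence. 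The paper's route (Lemma~\ref{le1}, going back to Roth) picks the single index $n\le N$ maximizing $k\LL_{q,k}(\cS_s)$, bounds $m\LL_{q,m}(\cS_s)\le n\LL_{q,n}(\cS_s)$ uniformly over the slab variable to get $n\LL_{q,n}(\cS_s)\ge N\LL_{q,N}(\cP_{N,s+1})-1$, and then needs a second, separate contradiction argument (about the largest index satisfying \eqref{eq2}) to upgrade ``for some $n\le N$'' to ``for infinitely many $n$''. You instead keep the full sum over slabs, apply the convexity inequality, and derive the averaged estimate $\sum_{m\le N}(m/N)^q\LL_{q,m}(\cS_s)^q\gg_{s,q}(\log N)^{qs/2}/N^{q-1}$, from which the ``infinitely many'' conclusion drops out in one step via the tail estimate $\sum_{m\le N}(\log m)^{qs/2}\le N(\log N)^{qs/2}$; your threshold $\varepsilon$ depends only on $c_{s+1,q}$ and $q$, so the resulting $\alpha_{s,q}$ is legitimately independent of the sequence, and both error terms ($O(N^{-q})$ from $|m/N-u|\le 1/N$ and $O_{m_0}(N^{-q})$ from the initial segment, using $\LL_{q,m}\le 1$) are indeed negligible against the main terms. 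What each approach buys: yours avoids the maximizing-index lemma entirely, proves a slightly stronger averaged (Ces\`aro-type) statement, and merges the two contradiction steps into one; the paper's Lemma~\ref{le1} is cruder but works verbatim for all $1\le q\le\infty$ (including the sup norm, with the constant loss being only the additive $1$), which is why it is the standard formulation.
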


Since the paper including Proinov's proof is not widely available we present the proof of Theorem~\ref{thm2} according to the original paper. To this end we require the following well known lemma which goes back to Roth~\cite{Roth}.

\begin{lemma}\label{le1}
For $s\in \NN$ let $\cS_s=(\bsy_k)_{k\ge 0}$, where $\bsy_k=(y_{1,k},\ldots,y_{s,k})$ for $k \in \NN_0$, be an arbitrary sequence in the $s$-dimensional unit cube with $\LL_q$ discrepancy $\LL_{q,N}(\cS_s)$ for $1 \le q \le \infty$. Then for every $N\in \NN$ there exists an $n \in \{1,2,\ldots,N\}$ such that $$n \LL_{q,n}(\cS_s) \ge N \LL_{q,N}(\cP_{N,s+1})-1$$ where $\cP_{N,s+1}$ is the finite point set in $[0,1)^{s+1}$ consisting of the points $$\bsx_k=(y_{1,k},\ldots,y_{s,k},k/N) \ \ \mbox{ for }\ k=0,1,\ldots ,N-1.$$
\end{lemma}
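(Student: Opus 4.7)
My plan hinges on a pointwise decomposition of the local discrepancy function of $\cP_{N,s+1}$ that comes from the special structure of the last coordinate. For $\bst=(\bsu,t_{s+1})$ with $\bsu=(u_1,\ldots,u_s) \in [0,1]^s$, set $n := \lceil N t_{s+1}\rceil$. The condition $k/N < t_{s+1}$ selects precisely the indices $k \in \{0,1,\ldots,n-1\}$, so only the first $n$ points of $\cS_s$ contribute to $A_N([\bszero,\bst),\cP_{N,s+1})$. Denoting by $\Delta^{(n)}(\bsu)$ the local discrepancy at $\bsu$ of the point set $\{\bsy_0,\ldots,\bsy_{n-1}\}$, a short rearrangement of the definition of $\Delta_{\cP_{N,s+1}}$ yields the key identity
$$\Delta_{\cP_{N,s+1}}(\bst) \;=\; \frac{n}{N}\,\Delta^{(n)}(\bsu) \;+\; u_1 \cdots u_s\!\left(\frac{n}{N} - t_{s+1}\right).$$

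Next I apply Minkowski's inequality in $\LL_q([0,1]^{s+1})$ to bound $\LL_{q,N}(\cP_{N,s+1})$ by the sum of the $\LL_q$-norms of the two summands on the right. The second summand is bounded pointwise by $1/N$, since $u_1\cdots u_s \in [0,1]$ and $|n/N - t_{s+1}| \le 1/N$; hence its $\LL_q$-norm on $[0,1]^{s+1}$ is at most $1/N$. The first summand depends on $t_{s+1}$ only through $n=\lceil N t_{s+1}\rceil$, which is constant on each horizontal slab $t_{s+1} \in ((n-1)/N,\, n/N]$ of length $1/N$. Partitioning the $t_{s+1}$-integration accordingly gives
$$\int_{[0,1]^{s+1}}\!\left|\frac{n}{N}\,\Delta^{(n)}(\bsu)\right|^{q} \rd\bst \;=\; \frac{1}{N^{q+1}}\sum_{n=1}^N n^q\,[\LL_{q,n}(\cS_s)]^q.$$

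Setting $M := \max_{1 \le n \le N} n\,\LL_{q,n}(\cS_s)$, the right-hand side is bounded by $M^q/N^q$, so the first summand contributes at most $M/N$. Combining the two bounds produces $\LL_{q,N}(\cP_{N,s+1}) \le (M+1)/N$, which is equivalent to the existence of some $n\in\{1,\ldots,N\}$ with $n\,\LL_{q,n}(\cS_s) = M \ge N\,\LL_{q,N}(\cP_{N,s+1})-1$, as required. The case $q=\infty$ follows by running the same argument pointwise (with suprema replacing integrals). The only real obstacle is the bookkeeping around the definition $n=\lceil N t_{s+1}\rceil$ at boundary points (for instance $t_{s+1}=0$, or when $N t_{s+1}\in\NN$), but these form a set of measure zero and so have no impact on the $\LL_q$-norms.
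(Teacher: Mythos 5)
Your proof is correct and follows essentially the same route as the paper: the identity $\Delta_{\cP_{N,s+1}}(\bst)=\frac{n}{N}\Delta^{(n)}(\bsu)+u_1\cdots u_s(\frac{n}{N}-t_{s+1})$ is exactly the paper's decomposition (divided by $N$), and after the triangle inequality you handle the main term by slicing in $t_{s+1}$ and bounding by $M=\max_{1\le n\le N} n\,\LL_{q,n}(\cS_s)$, just as the paper bounds $m\,\LL_{q,m}(\cS_s)\le n\,\LL_{q,n}(\cS_s)$ for each fixed $t_{s+1}$ before integrating. The only cosmetic difference is that you compute the sliced integral exactly and pick the maximizing index at the end rather than at the start.
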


\begin{proof}
We present the proof for finite $q$. For $q=\infty$ the proof is similar and can be found in \cite[Lemma~3.54]{DP10} or \cite[Lemma~3.7]{niesiam}.
Choose $n \in \{1,2,\ldots,N\}$ such that $$n \LL_{q,n}(\cS_s) =\max_{k=1,2,\ldots,N} k \LL_{q,k}(\cS_s).$$

Consider a sub-interval of the $s+1$-dimensional unit cube of the form $E=\prod_{i=1}^{s+1}[0,t_i)$ with $\bst=(t_1,t_2,\ldots,t_{s+1}) \in [0,1)^{s+1}$ and put $m=m(t_{s+1}):=\lceil N t_{s+1}\rceil$. Then a point $\bsx_k$, $k=0,1,\ldots, N-1$, belongs to $E$, if and only if $\bsy_k \in \prod_{i=1}^s[0,t_i)$ and $0 \le k < N t_{s+1}$. Denoting $E'=\prod_{i=1}^s[0,t_i)$ we have $A_N(E,\cP_{N,s+1})=A_m(E',\cS_s)$ and therefore
\begin{align*}
N \Delta_{\cP_{N,s+1}}(\bst) = & A_N(E,\cP_{N,s+1}) -N  t_1 t_2\cdots t_{s+1}\\
= & A_m(E',\cS_s) - m t_1 t_2 \cdots t_s + m t_1 t_2 \cdots t_s-N  t_1 t_2\cdots t_{s+1}\\
= & m \Delta_{\cS_s}(\bst')+ t_1 t_2 \cdots t_s (m-N t_{s+1}),
\end{align*}
where $\bst'=(t_1,\ldots,t_s)$.
From the definition of $m$ it is clear that $|m-N t_{s+1}| \le 1$ and hence we obtain $$N |\Delta_{\cP_{N,s+1}}(\bst)| \le m |\Delta_{\cS_s}(\bst')| +1.$$ Furthermore, for every $t_{s+1} \in [0,1)$ we have $m \LL_{m,q}(\cS_s) \le n \LL_{n,q}(\cS_s)$. Hence we obtain
\begin{align*}
  N \LL_{q,N} (\cP_{N,s+1}) \le &  \|m \Delta_{\cS_s} +1\|_{\LL_q} \le \|m \Delta_{\cS_s}\|_{\LL_q} +1\\
= & \left(\int_0^1 \int_{[0,1]^s} | m \Delta_{\cS_s} (\bst') |^q \rd \bst' \rd t_{s+1}\right)^{1/q} +1\\
= & \left(\int_0^1 (m \LL_{m,q}(\cS_s))^q \rd t_{s+1}\right)^{1/q}+1\\
\le & n \LL_{n,q}(\cS_s) +1
\end{align*}
which completes the proof of the lemma.
\end{proof}

Now we can give the proof of Theorem~\ref{thm2} according to Proinov~\cite{pro86}.

\begin{proof}[Proof of Theorem~\ref{thm2}]
We use the notation from Lemma~\ref{le1}. For the $\LL_q$ discrepancy of the finite point set $\cP_{N,s+1}$ in $[0,1)^{s+1}$ we obtain from Theorem~\ref{thm1} and Schmidt's extension \eqref{extsch} that $$N \LL_{q,N}(\cP_{N,s+1}) \ge c_{s+1,q} (\log N)^{\frac{s}{2}}$$ for some real $c_{s+1,q}>0$ which is independent of $N$. Let $\alpha_{s,q} \in (0,c_{s+1,q})$ and let $N \in \NN$ be large enough such that $c_{s+1,q} (\log N)^{\frac{s}{2}} -1 \ge \alpha_{s,q} (\log N)^{\frac{s}{2}}$. According to Lemma~\ref{le1} there exists an $n \in \{1,2,\ldots,N\}$ such that
\begin{equation}\label{eq1}
n \LL_{2,n}(\cS_s) \ge N \LL_{2,N}(\cP_{N,s+1})-1 \ge c_{s+1,q} (\log N)^{\frac{s}{2}} -1 \ge \alpha_{s,q} (\log n)^{\frac{s}{2}}.
\end{equation}
Thus we have shown that for every large enough $N \in \NN$ there exists an $n \in \{1,2,\ldots,N\}$ such that
\begin{equation}\label{eq2}
n \LL_{2,n}(\cS_s) \ge \alpha_{s,q} (\log n)^{\frac{s}{2}}.
\end{equation}
It remains to show that \eqref{eq2} holds for infinitely many $n \in \NN$. Assume on the contrary that \eqref{eq2} holds for finitely many $n \in \NN$ only and let $m$ be that largest integer with this property. Then choose $N \in \NN$ large enough such that $$c_{s+1,q} (\log N)^{\frac{s}{2}} -1 \ge \alpha_{s,q} (\log N)^{\frac{s}{2}} > \max_{k=1,2,\ldots,m} k \LL_{q,k}(\cS_s).$$ For this $N$ we can find an $n \in \{1,2,\ldots,N\}$ for which \eqref{eq1} and \eqref{eq2} hold true. However, \eqref{eq1} implies that $n > m$ which leads to a contradiction since $m$ is the largest integer such that \eqref{eq2} is true. Thus we have shown that \eqref{eq2} holds for infinitely many $n \in \NN$ and this completes the proof.
\end{proof}

\section{Upper bounds}\label{sec_ub}

In 1956 Davenport \cite{dav} proved that Theorem~\ref{thm1} is best possible in dimension 2. He considered the $N=2 M$ points $(\{\pm n \alpha\},n/M)$ for $n=1,2,\ldots,M$ and showed that if $\alpha$ is an irrational number having a continued fraction expansion with bounded partial quotients then the $\LL_2$ discrepancy of the collection $\cP_{N,2}^{{\rm sym}}(\alpha)$ of these points satisfies $$\LL_{2,N}(\cP_{N,2}^{{\rm sym}}(\alpha)) \ll_{\alpha} \frac{\sqrt{\log N}}{N}$$ where the implied constant only depends on $\alpha$. Nowadays there exist several variants of such ``symmetrized'' point sets having optimal order of $\LL_2$ discrepancy in dimension 2, see, for example, \cite{lp} or \cite{pro1988a}. A nice discussion of the topic, which is often referred to as {\it Davenport's reflection principle} can be found in \cite{CS2000}. Recently, Bilyk \cite{Bil} proved that unsymmetrized versions of such point sets, i.e., point sets of the form $\cP_{N,2}(\alpha)=\{(\{n \alpha\},n/N)\ : \, n=0,1,\ldots,N-1\}$
satisfy $$\LL_{2,N}(\cP_{N,2}(\alpha)) \ll_{\alpha} \frac{\sqrt{\log N}}{N}$$ if and only if the bounded partial quotients of $\alpha=[a_0;a_1,a_2,\ldots]$ satisfy $$\left|\sum_{k=0}^n(-1)^k a_k\right| \ll_\alpha \sqrt{n}.$$ Further examples of two-dimensional finite point sets with optimal order of $\LL_2$ discrepancy which are based on scrambled digital nets can be found in \cite{FauPi09a,FauPi09,FauPiPri11,FauPiPriSch09,HZ,KriPi2006}.

Concerning higher dimensions, in \cite{roth2} Roth proved that the bound from Theorem~\ref{thm1} is best possible in dimension 3 and finally Roth \cite{Roth4} and Frolov \cite{Frolov} proved that Theorem~\ref{thm1} is best possible in any dimension. In \cite{chen1980} Chen showed that the $\LL_q$ discrepancy bound \eqref{extsch} is best possible in the order of magnitude in $N$ for any $q>1$, i.e., for every $N,s \in \NN$, $N \ge 2$, there exists an $N$-element point set $\cP_{N,s}$ in $[0,1)^s$ such that $$\LL_{q,N}(\cP_{N,s}) \ll_{s,q} \frac{(\log N)^{\frac{s-1}{2}}}{N},$$ where the implied constant only depends on $s$ and $q$, but not on $N$. See also \cite{BC} for more information. Further existence results for point sets with optimal order of $\LL_q$ discrepancy can be found in \cite{CS3,DP05b,Skr12}. However, all these results for dimension 3 and higher are only existence results obtained by averaging arguments and it remained a long standing open question in discrepancy theory to find explicit
constructions of finite point sets with optimal order of $\LL_q$ discrepancy in the sense of Roth's lower bound. The breakthrough in this direction was achieved by Chen and Skriganov \cite{CS02}, who provided a complete solution to this problem. They gave, for the first time, for every integer $N \ge 2$ and every dimension $s \in \NN$, explicit constructions of finite $N$-element point sets in $[0,1)^s$ whose $\LL_2$ discrepancy achieves an order of convergence of $(\log N)^{(s-1)/2}/N$. Their construction, which will be explained in Section~\ref{sec_CS}, uses a finite field $\FF_b$ of order $b$ with $b \ge 2 s^2$. The result in \cite{CS02} was extended to the $\LL_q$ discrepancy for $1 \le q < \infty$ by Skriganov~\cite{Skr}. \\

For one-dimensional infinite sequences there are some examples with optimal order of $\LL_2$ discrepancy in the sense of Proinov's lower bound from Theorem~\ref{thm2}. For example the symmetrized sequence $(\{\alpha\},\{-\alpha\},\{2 \alpha\},\{-2 \alpha\},\{3 \alpha\},\{-3\alpha\},\ldots)$, where $\alpha$ is an irrational number having a continued fraction expansion with bounded partial quotients, which goes back to Davenport (see \cite[Theorem~1.75]{DT97}). Other examples are based on symmetrized van der Corput sequences, see, for example, \cite{chafa,fau90,lp,pro1988a}. However, in spite of the construction of Chen and Skriganov of finite point sets with optimal order of magnitude of the $\LL_2$ discrepancy there was still no explicit construction of an {\it infinite} sequence in $[0,1)^s$ with optimal order of $\LL_2$ discrepancy in the sense of Theorem~\ref{thm2} for dimensions $s \ge 2$. This problem was recently solved in \cite{DP13} where the authors provide  explicit constructions of {\it infinite} 
sequences in $[0,1)
^s$ for which the first $N \ge 2$ points achieve a  $\LL_2$ discrepancy of order $(\log N)^ {s/2}/N$ for arbitrary $s \in \NN$. This construction is based on higher order digital sequences over the finite field $\FF_2$ and will be explained in Section~\ref{sec_DP}. This construction also yields a new construction of finite point sets with optimal order of $\LL_2$ discrepancy.\\

Since the finite constructions of Chen and Skriganov as well as the constructions of infinite sequences are based on the digital construction method, we will explain this construction scheme in the following section.

\section{Digital nets and sequences}\label{sec_DC}

The concepts of digital nets and sequences were introduced by Niederreiter~\cite{nie87} in 1987 and are nowadays among the most powerful methods for the construction of low discrepancy point sets and sequences. These constructions are based on linear algebra over $\FF_b$. A detailed overview of this topic is given in the books \cite{DP10,niesiam}.

First we recall the definition of digital nets according to Niederreiter which we present here in a slightly more general form. For $m,p \in \NN$ with $p \ge m$ let $C_1,\ldots, C_s \in \FF_b^{p \times m}$ be $p \times m$ matrices over $\FF_b$ (originally one uses $m \times m$ matrices). For $n \in \{0,\ldots ,b^m-1\}$ with $b$-adic expansion $n = n_0 + n_1 b + \cdots + n_{m-1} b^{m-1}$ define the $b$-ary digit vector $\vec{n}$ as $\vec{n} = (n_0, n_1, \ldots, n_{m-1})^\top \in \FF_b^m$ (the symbol $\top$ means the transpose of a vector or a matrix). Then compute
\begin{equation*}
C_j \vec{n} =:(x_{j,n,1}, x_{j,n,2},\ldots,x_{j,n,p})^\top \quad \mbox{for } j = 1,\ldots, s,
\end{equation*}
where the matrix vector product is evaluated over $\FF_b$, and put
\begin{equation*}
x_{j,n} = x_{j,n,1} b^{-1} + x_{j,n,2} b^{-2} + \cdots + x_{j,n,p} b^{-p} \in [0,1).
\end{equation*}
The $n$th point $\boldsymbol{x}_n$ of the net $\cP_{b^m,s}$ is given by $\boldsymbol{x}_n = (x_{1,n}, \ldots, x_{s,n})$. A net $\cP_{b^m,s}$ constructed this way is called a {\it digital net (over $\FF_b$) with generating matrices} $C_1,\ldots,C_s$. Note that a digital net consists of $b^m$ elements in $[0,1)^s$.

In the following we briefly describe the geometrical properties of digital nets. According to Niederreiter \cite{nie87} a $(t,m,s)$-net in base $b$ is a $b^m$-element point set $\cP_{b^m,s}$ in $[0,1)^s$ such that every interval of the form $$\prod_{j=1}^s \left[\frac{a_j}{b^{d_j}}, \frac{a_j +1}{b^{d_j}}\right)$$ where $d_1,\ldots,d_s \in \NN_0$ with $d_1+\cdots+d_s=m-t$ and $a_j \in \{0,1,\ldots,b^{d_j}-1\}$ for $j=1,\ldots,s$ contains exactly $b^t$ elements of $\cP_{b^m,s}$. The following result connects digital nets with $(t,m,s)$-nets (see \cite[Theorem~4.52]{DP10} or \cite[Theorem~4.28]{niesiam}).
\begin{lemma}[Niederreiter, 1987]\label{lem_net_dig_net}
Let $\cP_{b^m,s}$ be a digital net over $\mathbb{F}_b$ with generating matrices $C_1,\ldots, C_s$. Let $C_j = (\vec{c}_{j,1}, \vec{c}_{j,2}, \ldots \vec{c}_{j,p})^\top$, i.e., $\vec{c}_{j,k}^\top$ is the $k$th row of $C_j$. Then $\cP_{b^m,s}$ is a $(t,m,s)$-net in base $b$ if and only if for all $d_1, d_2,\ldots, d_s \in \mathbb{N}_0$ with $d_1+ \cdots + d_s = m-t$, the vectors
\begin{equation*}
\vec{c}_{1,1}, \vec{c}_{1,2}, \ldots, \vec{c}_{1,d_1}, \ldots, \vec{c}_{s,1}, \vec{c}_{s,2},\ldots, \vec{c}_{s,d_s}
\end{equation*}
are linearly independent over $\mathbb{F}_b$.
\end{lemma}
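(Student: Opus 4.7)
The plan is to translate the geometric condition ``$\bsx_n$ lies in the elementary box $E$'' into a linear system over $\FF_b$ for the digit vector $\vec n$, and then read off the number of points in $E$ from the rank of the coefficient matrix. This reduces the lemma to a standard rank--nullity count.

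First, I would fix nonnegative integers $d_1,\ldots,d_s$ with $d_1+\cdots+d_s = m-t$ and a box
\[
E=\prod_{j=1}^s\left[\frac{a_j}{b^{d_j}},\frac{a_j+1}{b^{d_j}}\right),
\qquad a_j = a_{j,1}b^{d_j-1}+\cdots+a_{j,d_j} \in \{0,1,\ldots,b^{d_j}-1\}.
\]
The condition $\bsx_n\in E$ is equivalent to prescribing the first $d_j$ base-$b$ digits of $x_{j,n}$, i.e.\ $x_{j,n,k}=a_{j,k}$ for $j=1,\ldots,s$ and $k=1,\ldots,d_j$. By the definition of the digital net, $x_{j,n,k}=\vec c_{j,k}^{\,\top}\vec n$, so $\bsx_n\in E$ is equivalent to $M\vec n = \vec a$ over $\FF_b$, where $M$ is the $(m-t)\times m$ matrix with rows $\vec c_{1,1}^{\,\top},\ldots,\vec c_{1,d_1}^{\,\top},\ldots,\vec c_{s,1}^{\,\top},\ldots,\vec c_{s,d_s}^{\,\top}$ and $\vec a$ collects the digits $a_{j,k}$.

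Next, I would apply rank--nullity. The solution set of $M\vec n=\vec a$ in $\FF_b^m$ is either empty or an affine subspace of size $b^{m-\mathrm{rank}(M)}$. Since $n$ ranges over all of $\{0,1,\ldots,b^m-1\}$ (equivalently $\vec n$ over all of $\FF_b^m$), the number of points of $\cP_{b^m,s}$ in $E$ equals the number of $\vec n$ solving this system. If the rows of $M$ are linearly independent, then $\mathrm{rank}(M)=m-t$, the system is solvable for every right-hand side $\vec a\in\FF_b^{m-t}$, and every such $E$ contains exactly $b^{m-(m-t)}=b^t$ points; this is precisely the $(t,m,s)$-net condition, ranging over all admissible tuples $(d_1,\ldots,d_s)$ and all choices of $(a_1,\ldots,a_s)$.

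For the converse, suppose the rows of $M$ are linearly dependent for some admissible $(d_1,\ldots,d_s)$. Then $\mathrm{rank}(M)<m-t$, so the image $M(\FF_b^m)$ is a proper subspace of $\FF_b^{m-t}$. Choosing $\vec a$ in the image produces a box $E$ containing $b^{m-\mathrm{rank}(M)}>b^t$ points, while choosing $\vec a$ outside the image (which exists since the image is proper) produces a box containing no points. Either case violates the $(t,m,s)$-net property, completing the equivalence.

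The only real bookkeeping obstacle is aligning ``digit position $k$ of $a_j$'' with ``row $k$ of $C_j$'' consistently; once the indexing convention is fixed, the argument is a direct application of rank--nullity, and no nontrivial difficulty remains.
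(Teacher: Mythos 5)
Your argument is correct: translating membership of $\bsx_n$ in an elementary box into the linear system $M\vec n=\vec a$ over $\FF_b$ and counting solutions by rank--nullity (exactly $b^t$ per box when the rows are independent, and either $>b^t$ or $0$ for some box when they are dependent) is precisely the standard proof of this lemma. The survey itself gives no proof but defers to \cite[Theorem~4.52]{DP10} and \cite[Theorem~4.28]{niesiam}, where the argument is essentially the one you wrote, so there is nothing to add beyond the digit-indexing bookkeeping you already flagged.
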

A digital net over $\FF_b$ which is a $(t,m,s)$-net in base $b$ is called a digital $(t,m,s)$-net over $\FF_b$.

We also recall the definition of digital sequences according to Niederreiter, which are infinite versions of digital nets. Let $C_1,\ldots, C_s \in \FF_b^{\mathbb{N} \times \mathbb{N}}$ be $\mathbb{N} \times \mathbb{N}$ matrices over $\FF_b$. For $C_j = (c_{j,k,\ell})_{k, \ell \in \mathbb{N}}$ we assume that for each $\ell \in \mathbb{N}$ there exists a $K(\ell) \in \mathbb{N}$ such that $c_{j,k,\ell} = 0$ for all $k > K(\ell)$. For $n \in \NN_0$ with $b$-adic expansion $n = n_0 + n_1 b + \cdots + n_{m-1} b^{m-1} \in \mathbb{N}_0$,  define the infinite $b$-adic digit vector of $n$ by $\vec{n} = (n_0, n_1, \ldots, n_{m-1}, 0, 0, \ldots )^\top \in \FF_b^{\mathbb{N}}$. Then compute
\begin{equation}\label{eq_dig_seq}
C_j \vec{n}=:(x_{j,n,1}, x_{j,n,2},\ldots)^\top \quad \mbox{for } j = 1,\ldots, s,
\end{equation}
where the matrix vector product is evaluated over $\FF_b$, and put
\begin{equation*}
x_{j,n} = x_{j,n,1} b^{-1} + x_{j,n,2} b^{-2} + \cdots \in [0,1).
\end{equation*}
Note that \eqref{eq_dig_seq} is only a finite sum, since it can be written as
\begin{equation*}
 \sum_{\ell=1}^m c_{j,k,\ell} n_{\ell-1} = x_{j,n,k} \quad \mbox{for } j = 1, \ldots, s \mbox{ and } k \in \mathbb{N}.
\end{equation*}
Since $c_{j,k,\ell} = 0$ for all $k$ large enough it follows that the $x_{j,n,k}$ eventually become zero. This implies that the numbers $x_{j,n}$ have always a finite base $b$ expansion.

The $n$th point $\boldsymbol{x}_n$ of the sequence $\cS_s$ is given by $\boldsymbol{x}_n = (x_{1,n}, \ldots, x_{s,n})$. A sequence $\cS_s$ constructed this way is called a {\it digital sequence (over $\FF_b$) with generating matrices} $C_1,\ldots,C_s$. 

In the following we briefly describe the geometrical properties of digital sequences. According to Niederreiter \cite{nie87} a $(t,s)$-sequence in base $b$ is an infinite sequence $(\bsx_n)_{n \ge 0}$ in $[0,1)^s$ such that for all $m,k \in \NN_0$ the point set consisting of $$\bsx_{k b^m},\bsx_{k b^m +1}, \ldots, \bsx_{(k+1) b^m-1}$$ forms a $(t,m,s)$-net in base $b$. Let $C_1,\ldots, C_s$ be the generating matrices of a digital sequence. Let $C^{(m)}_j$ denote the left upper $m \times m$ matrix of $C_j$. Using Lemma~\ref{lem_net_dig_net} it can be shown that a digital sequence $\cS_s$ is a $(t,s)$-sequence in base $b$ if and only if the matrices $C^{(m)}_1,\ldots, C_s^{(m)} \in \mathbb{F}_b^{m \times m}$ generate a digital $(t,m,s)$-net over $\FF_b$. A digital sequence over $\FF_b$ which is a $(t,s)$-sequence in base $b$ is then called a digital $(t,s)$-sequence over $\FF_b$. See \cite[Theorem~4.84]{DP10} or \cite[Theorem~4.36]{niesiam} for more details.

Explicit constructions of suitable generating matrices $C_1,\ldots, C_s$ over $\FF_b$ were obtained by Sobol'~\cite{sob67}, Faure~\cite{fau82}, Niederreiter~\cite{nie87,niesiam}, Niederreiter-Xing~\cite{NX96} and others (see \cite[Chapter~8]{DP10} for an overview). However, these generating matrices are particularly designed to have low $\LL_{\infty}$ discrepancy (which is of order $(\log N)^s/N$ for infinite sequences and $(\log N)^{s-1}/N$ for finite point sets) and it is not known whether they achieve the optimal order for $\LL_q$ discrepancy for finite $q$.

\section{Walsh series expansion of discrepancy function}\label{sec_walsh}

All the currently known constructions of point sets and sequences which achieve the optimal order of the $\LL_q$ discrepancy in dimension $s > 2$ make use of the Walsh series expansion of the discrepancy function. We describe this expansion and the necessary background in the following.

Let the real number $x \in [0,1)$ have $b$-adic expansion $x =
\frac{x_1}{b} + \frac{x_2}{b^2} + \cdots$, with $x_i \in \{0,1,\ldots ,b-1\}$ and
where infinitely many $x_i$ are different from $b-1$. For $k \in
\mathbb{N}$, $k = \kappa_1 b^{a_1-1} + \cdots +
\kappa_{\nu}b^{a_\nu-1}$, $a_1 > \cdots > a_\nu >0$ and $
\kappa_1,\ldots, \kappa_\nu \in \{1,\ldots, b-1\}$, we define the $k$th Walsh function
by
$$\mathrm{wal}_k(x) = \omega_b^{\kappa_1 x_{a_1} + \cdots + \kappa_v x_{a_v}},$$ where
$\omega_b = \exp(2\pi \mathrm{i}/b)$. For $k = 0$ we set
$\mathrm{wal}_0(x) = 1$.

In dimensions $s > 1$ we use products of the Walsh functions. Let $\bsx = (x_1, x_2,\ldots, x_s) \in [0,1]^s$ and $\bsk = (k_1, k_2, \ldots, k_s) \in \mathbb{N}_0^s$. Then we define the $\bsk$th Walsh function by
\begin{equation*}
\mathrm{wal}_{\bsk}(\bsx) = \prod_{j=1}^s \wal_{k_j}(x_j).
\end{equation*}

For a function $f:[0,1]^s \rightarrow \mathbb{R}$ we define the $\bsk$th
Walsh coefficient of $f$ by $$\widehat{f}(\bsk) = \int_{[0,1]^s} f(\bsx)
\overline{\mathrm{wal}_{\bsk}(\bsx)} \mathrm{\, d} \bsx$$ and we can form the
Walsh series $$f(\bsx) \sim \sum_{\bsk \in \mathbb{N}_0^s} \widehat{f}(\bsk)
\mathrm{wal}_{\bsk}(\bsx).$$ Note that we will not assume that we have point-wise equality in the above equation. Instead we use Parseval's identity $$\int_{[0,1]^s} |f(\bsx)|^2 \,\mathrm{d} \bsx = \sum_{\bsk \in \mathbb{N}_0^s} |\widehat{f}(\bsk)|^2,$$ which holds since the Walsh function system is complete in $\LL_2([0,1]^s)$ and does not require point-wise equality.

Note that throughout the paper Walsh functions and digital nets and sequences are
defined using the same prime number $b$.

What makes the Walsh functions so useful for analyzing the $\LL_2$ discrepancy (and the $\LL_q$ discrepancy) of digital nets is the character property. This means that for a digital net $\cP_{N,s}$ with generating matrices $C_1, C_2, \ldots, C_s \in \FF_b^{p \times m}$ we have
\begin{equation}\label{eq_char}
\frac{1}{N} \sum_{n=0}^{N-1} \wal_{\bsk}(\bsx_n) = \left\{\begin{array}{rl} 1 & \mbox{if } \bsk \in \mathcal{D}, \\ 0 & \mbox{otherwise}, \end{array} \right.
\end{equation}
where
\begin{equation*}
\mathcal{D} = \mathcal{D}(C_1,\ldots, C_s) = \{\bsk \in \mathbb{N}_0^s: C_1^\top \vec{k}_1 + \cdots + C_s^\top \vec{k}_s = \bszero\},
\end{equation*}
where for $k$ with $b$-adic expansion $\kappa_0 + \kappa_1 b + \kappa_2 b^2 + \cdots$ we write $\vec{k} = (\kappa_0, \kappa_1, \ldots, \kappa_{p-1})$.

We now consider the Walsh series expansion of the local discrepancy function $\Delta_{\cP_{N,s}}$ for digital nets $\cP_{N,s}$. We can write
\begin{equation*}
\Delta_{\cP_{N,s}}(\bst) = \frac{1}{N} \sum_{n=0}^{N-1} 1_{[\bszero,\bst)}(\bsx_n) - t_1 t_2\cdots t_s,
\end{equation*}
where $1_{[\bszero, \bst)}$ is the indicator function, i.e., $1_{[\bszero,\bst)}(\bsx_n)$ is $1$ if $\bsx_n \in [\bszero,\bst)$ and 0 otherwise. Since the Walsh series expansions of the indicator function and polynomials are well known \cite{Fine, Price} one can obtain the Walsh series representation of $\Delta_{P_{N,s}}$. Substituting this expansion into the definition of the $\LL_2$ discrepancy and using Parseval's identity then yields an expression of the form
\begin{equation}\label{eq_L2_dual}
\LL_{2,N}^2(\cP_{N,s}) = \sum_{\bsk, \bsl \in \mathcal{D} \setminus \{\bszero\}} r(\bsk, \bsl),
\end{equation}
where $r(\bsk, \bsl) = \prod_{j =1}^s r(k_j, l_j)$. We describe the structure of $r(k,l)$ in the following. To do so, assume that $k$ and $l$ have $b$-adic expansion of the form 
\begin{equation}\label{eq_kl}
k = \kappa_1 b^{a_1-1} + \lfloor \kappa_2 b^{a_2-1} \rfloor + k'' \quad \mbox{ and } \quad l = \lambda_1 b^{c_1-1} + \lfloor \lambda_2 b^{c_2-1} \rfloor + l'',
\end{equation}
where $\kappa_1, \kappa_2, \lambda_1, \lambda_2 \in \{1,2, \ldots, b-1\}$, $a_1 > a_2 \ge 0$, $c_1 > c_2 \ge 0$ and $k'' <b^{a_2-1}$ and $l'' < b^{c_2-1}$. (Note that for $a_2=0$ we have $\lfloor \kappa_2 b^{a_2-1} \rfloor = 0$, which is used if $k$ has only one nonzero digit.) Further let $k' = k-\kappa_1 b^{a_1-1}$ and $l' = l - \lambda_1 b^{c_1-1}$. The ideas for finding explicit constructions are based on the following result: A detailed analysis of the Walsh coefficients of the discrepancy function yields that the coefficients $r(k,l)$ are roughly bounded by (cf. \cite[Lemma~1]{DP13} for the case $b=2$)
\begin{equation}\label{ineq_bound_rkl}
|r(k,l)| \ll \left\{ \begin{array}{ll} b^{-2a_1} & \mbox{if } k = l, \\ b^{-\max(a_1,c_2) - \max(a_2,c_1)} & \mbox{if } k \sim l, \\ 0 & \mbox{otherwise}, \end{array} \right.
\end{equation}
where $k \sim l$ means that either $k' = l'$, $k = l'$ or $k'=l$. We write $k \not\sim l$ if $k \neq l$ and we do not have $k \sim l$. In other words $r(k,l)$ is of order $b^{-2a_1}$ if $k=l$, is of order $b^{-\max(a_1,c_2) - \max(a_2,c_1)}$ if $k$ is `very similar' to $l$, and $0$ if $k$ is `not very similar' to $l$. In other words, the coefficients $r(k,l)$ have some sparsity property.

It is well understood that (cf. \cite{CS02, CS3, DP05b})
\begin{equation}\label{bound_L2_diagonal}
\sum_{\bsk \in \mathcal{D}} r(\bsk,\bsk) \ll b^{-2m+2t} (m-t+1)^{s-1}.
\end{equation}
Thus, the digital $(t,m,s)$-net property takes care of the `diagonal' terms $r(\bsk,\bsk)$ where $\bsk = \bsl$. The difficulty in obtaining optimal bounds on the $\LL_2$ discrepancy therefore lies solely in finding constructions which also make sure that the `nondiagonal' terms $r(\bsk,\bsl)$, where $\bsk \neq \bsl$, are small. Below we describe two different strategies to find explicit constructions of generating matrices which yield digital nets and sequences with optimal order of the $\LL_2$ discrepancy. Before we do so, we introduce some metrics on $\mathbb{N}$ which will be useful for the subsequent discussion.\footnote{There is also a natural way of studying point sets and their discrepancy in algebraic terms. Using the $b$-adic expansion of $x \in [0,1)$, one can map an element $(x_1, x_2, \ldots)$ in the sequence space $\mathbb{F}_b^{\mathbb{N}}$ to the point $x = \sum_{i=1}^\infty x_i b^{-i} \in [0,1)$. This mapping is not injective since  for instance $(1,0,0,\ldots)$ and $(0, b-1, b-1, \ldots)$ 
get mapped to $1/b$. However, since the point sets we are studying (digital nets), all have a finite $b$-adic expansion, this problem never occurs in this context. Thus, instead of considering $b$-adic rationals in $[0,1)^s$ one can instead study subsets of $(\mathbb{F}_b^{\mathbb{N}})^s$ (in fact $(\mathbb{F}_b^{r})^s$ for $r$ large enough is sufficient). By \eqref{eq_char}, the set of Walsh functions forms the character theoretic dual of $\mathbb{F}_b^{\mathbb{N}}$, which can be identified with elements in $\mathbb{F}_b^{\mathbb{N}}$ for which only finitely many components are different from $0$ (which therefore in turn can be identified with the set of nonnegative integers). Equation~\eqref{eq_L2_dual} is then a functional applied to the character-theoretic dual space (which can be identified with $\mathcal{D}$) of the `point set', which can be identified with a linear subspace of $(\mathbb{F}_b^{\mathbb{N}})^s$. The algebraic point of view is advantageous when studying for instance digital nets 
constructed over 
finite fields of prime power order or certain finite rings. However, in the simple case of prime 
fields, it is sufficient to use the harmonic analysis point of view, where we consider an orthonormal basis of $\LL_2([0,1]^s)$ (the Walsh function system in our case) and study the behaviour of the series coefficients.}

\subsubsection*{Three metrics on $\mathbb{N}_0^s$}

To facilitate the discussion below we introduce three metrics on $\mathbb{N}_0^s$. Let $k \in \mathbb{N}_0$ be as above in \eqref{eq_kl}. Then we define the {\it NRT weight} (Niederreiter~\cite{nie86} and Rosenbloom-Tsfasman~\cite{RT}) by
\begin{equation*}
\mu_1(k) = \left\{\begin{array}{ll} a_1 & \mbox{if } k > 0, \\ 0 & \mbox{if } k =0. \end{array} \right.
\end{equation*}
For vectors $\bsk = (k_1, k_2, \ldots, k_s) \in \mathbb{N}_0^s$ we set
\begin{equation*}
\mu_1(\bsk) = \mu_1(k_1) + \mu_1(k_2) + \cdots + \mu_1(k_s).
\end{equation*}
The NRT weight is closely related to the $t$-value of a digital net, in fact, we have (cf. \cite[Theorem~7.8 and Corollary~7.9]{DP10})
\begin{equation*}
m-t + 1 =   \min_{\bsk \in \mathcal{D} \setminus \{\bszero\} } \mu_1(\bsk).
\end{equation*}

Next we define the {\it Hamming weight}. For $k \in \mathbb{N}_0$ the Hamming weight $\varkappa(k)$ is the number of nonzero digits in the base $b$ expansion of $k$. To be more precise, let $k \in \mathbb{N}$ have $b$-adic expansion
\begin{equation}\label{exp_k}
k = \kappa_{a_1-1} b^{a_1-1} + \kappa_{a_2-1} b^{a_2-1} + \cdots + \kappa_{a_\nu-1} b^{a_\nu-1},
\end{equation}
where $\kappa_i \in \{1,2,\ldots, b-1\}$ and $a_1 > a_2> \cdots > a_\nu > 0$. Then
\begin{equation*}
\varkappa(k) = \left\{\begin{array}{rl} \nu & \mbox{if } k \in \mathbb{N}, \\ 0 & \mbox{if } k = 0. \end{array} \right.
\end{equation*}
For vectors $\bsk  = (k_1, k_2, \ldots, k_s) \in \mathbb{N}_0^s$ we define
\begin{equation*}
\varkappa(\bsk) = \varkappa(k_1) + \varkappa(k_2) + \cdots + \varkappa(k_s).
\end{equation*}

The third metric is a generalization of the NRT weight to higher order. Using the expansion \eqref{exp_k} we set for $\alpha \in \mathbb{N}$
\begin{equation*}
\mu_\alpha(k) = \left\{\begin{array}{ll} 0 & \mbox{if } k = 0, \\ a_1 + a_2 + \cdots + a_{\min(\nu, \alpha)} & \mbox{if } k \in \mathbb{N}. \end{array} \right.
\end{equation*}
The motivation for using this metric is the fact that we can obtain a slightly weaker version of \eqref{ineq_bound_rkl} as
\begin{equation*}
|r(k,l)| \ll \left\{ \begin{array}{ll} b^{-2\mu_1(k)} & \mbox{if } k=l, \\ b^{-\max(\mu_2(k), \mu_2(l))} & \mbox{if } k \sim l, \\ 0 & \mbox{otherwise}. \end{array} \right.
\end{equation*}
Thus $\mu_2$ naturally occurs in the bound on $|r(k,l)|$.

\subsubsection*{The approach by Chen and Skriganov}

The first approach is by Chen and Skriganov~\cite{CS02} and is based on the strategy to find digital $(0,m,s)$-nets for which for all $\bsk, \bsl \in \mathcal{D}$ with $\bsk \neq \bsl$ there is at least one coordinate $j$ with $k_j \not\sim l_j$. This yields a quasi-orthogonality since then the second case in \eqref{ineq_bound_rkl} never occurs. Given this quasi-orthogonality, \eqref{eq_L2_dual} can be written as
\begin{equation}\label{CS_L2}
\LL_{2,N}^2(\cP_{N,s}) = \sum_{\bsk \in \mathcal{D}} r(\bsk,\bsk).
\end{equation}
In this case \eqref{bound_L2_diagonal} yields the result.

To achieve quasi-orthogonality we use the following observation. Let $(\kappa_0, \kappa_1, \ldots)$ be the digit vector of $k$ and let $(\lambda_1, \lambda_2, \ldots)$ be the digit vector of $l$. If those two digit vectors differ at three or more coordinates, then $k \not\sim l$. Thus if the Hamming weight $\varkappa(k-l) \ge 3$, then $k \not\sim l$. Thus, if for all $\bsk, \bsl \in \mathcal{D}$ with $\bsk \neq \bsl$, the Hamming weight $\varkappa(\bsk-\bsl) \ge 2s+1$, then there exists a coordinate $j$ for which the Hamming weight $\varkappa(k_j-l_j)$ is at least three and therefore $r(\bsk,\bsl) = 0$. Chen and Skriganov~\cite{CS02} gave a construction of digital nets for which the NRT weight is $m+1$ and the Hamming weight is at least $2s+1$. The result then follows from \eqref{bound_L2_diagonal} and \eqref{CS_L2}. A precise result can be stated as follows.

\begin{theorem}[Chen and Skriganov, 2002]\label{csle2D}
Let $\cP_{b^m,s}$ be a digital $(t,m,s)$-net over $\mathbb{F}_b$ with a prime number $b \ge 2 s^2$ such that $$\min_{\bsk \in (\mathcal{D} \setminus \{\bszero \}) \cap \{0,1,\ldots, b^m-1\}^s} \varkappa(\bsk) \ge 2s+1.$$ Then the $\LL_2$ discrepancy of the point set $\cP_{b^m,s}$ can be bounded by $$\LL_{2,b^m}(\cP_{b^m,s}) \ll_{s,b} \frac{(m+1)^{\frac{s-1}{2}}}{b^{m-t} }.$$
\end{theorem}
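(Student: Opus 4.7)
The plan is to follow the blueprint sketched just above the theorem: expand $\LL_{2,b^m}^2$ via Walsh--Parseval, use the Hamming-weight hypothesis to kill every off-diagonal term in the resulting double sum, and finish by invoking the diagonal bound \eqref{bound_L2_diagonal}. I would start from \eqref{eq_L2_dual},
\begin{equation*}
\LL_{2,b^m}^2(\cP_{b^m,s}) = \sum_{\bsk, \bsl \in \mathcal{D} \setminus \{\bszero\}} r(\bsk,\bsl), \qquad r(\bsk,\bsl) = \prod_{j=1}^s r(k_j, l_j),
\end{equation*}
and recall from \eqref{ineq_bound_rkl} that each factor $r(k_j,l_j)$ vanishes unless $k_j = l_j$ or $k_j \sim l_j$. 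My aim is then to show that every off-diagonal pair $(\bsk,\bsl)$ has a coordinate $j$ for which $r(k_j,l_j) = 0$, so that only the diagonal $\bsl = \bsk$ survives.

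The central step is a pigeonhole argument built on the fact that $\mathcal{D}$, being the $\FF_b$-kernel appearing in \eqref{eq_char}, is closed under digit-wise $\FF_b$-subtraction. In the natural Chen--Skriganov setting of $m \times m$ generating matrices, $\mathcal{D}$ sits inside $\{0,\ldots,b^m-1\}^s$, so for any distinct $\bsk,\bsl \in \mathcal{D}\setminus\{\bszero\}$ the difference $\bsk-\bsl$ is again a nonzero element of $\mathcal{D} \cap \{0,\ldots,b^m-1\}^s$. The hypothesis then gives $\varkappa(\bsk-\bsl) \ge 2s+1$, and since this total weight is spread over the $s$ coordinates, at least one index $j$ must satisfy $\varkappa(k_j - l_j) \ge 3$. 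I would then verify that each of the four cases $k_j = l_j$, $k_j' = l_j'$, $k_j = l_j'$, $k_j' = l_j$ defining $k_j \sim l_j$ forces the digit vectors of $k_j$ and $l_j$ to agree in all but at most two positions; the contrapositive rules out $k_j \sim l_j$, so $r(k_j,l_j) = 0$ and the full product $r(\bsk,\bsl)$ collapses.

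With the off-diagonal contribution annihilated, the identity reduces to
\begin{equation*}
\LL_{2,b^m}^2(\cP_{b^m,s}) = \sum_{\bsk \in \mathcal{D}\setminus\{\bszero\}} r(\bsk,\bsk) \ll b^{-2(m-t)}(m-t+1)^{s-1}
\end{equation*}
by \eqref{bound_L2_diagonal}. This is where the $(t,m,s)$-net hypothesis enters, through the NRT-weight characterization $m-t+1 = \min_{\bsk \in \mathcal{D}\setminus\{\bszero\}} \mu_1(\bsk)$. Taking square roots and bounding $(m-t+1)^{(s-1)/2} \le (m+1)^{(s-1)/2}$ yields the claim.

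The main obstacle I anticipate is the digit-level bookkeeping of the pigeonhole step: verifying that the Hamming-weight hypothesis really applies to the difference vector at hand, and that the four $\sim$-cases truly cap digit disagreement at two positions, requires care about the convention that the leading digit of $k$ occupies position $a_1$ while trailing zero digits are ignored. It is also worth flagging that the standing hypothesis $b \ge 2s^2$ plays no role in the analytic argument sketched above: it is a purely constructive requirement, used elsewhere in \cite{CS02} (via Reed--Solomon-type codes) to guarantee the existence of digital nets simultaneously satisfying the $(t,m,s)$-net property and the Hamming-weight lower bound $\varkappa \ge 2s+1$.
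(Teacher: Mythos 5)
Your proposal is correct and follows essentially the same route as the paper's own argument: the Walsh--Parseval expansion \eqref{eq_L2_dual}, quasi-orthogonality obtained by noting that $\bsk-\bsl$ (digit-wise over $\FF_b$) lies in $\mathcal{D}\setminus\{\bszero\}$ so its Hamming weight is at least $2s+1$, pigeonholing to get a coordinate with $\varkappa(k_j-l_j)\ge 3$ and hence $k_j\not\sim l_j$ and $r(\bsk,\bsl)=0$, and then the diagonal bound \eqref{bound_L2_diagonal}. Your closing remark that $b\ge 2s^2$ is not used in this analytic part but only in the explicit construction (Theorem~\ref{lecs2Ev1} with $\alpha=2s$) is also consistent with the paper's presentation.
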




\subsubsection*{The approach by Dick and Pillichshammer}

The second approach from \cite{DP13} uses a different route. Here the aim is not to avoid the cases where $\bsk \neq \bsl$, but rather to ensure that those terms do not contribute overly to the discrepancy. The NRT weight ensures that for $\bsk \in \mathcal{D}$ we have $\mu_1(\bsk)$ is large, which in turn implies that $r(\bsk,\bsk)$ is small. However, that does not ensure that the nondiagonal terms $r(\bsk,\bsl)$, where $\bsk \neq \bsl$, are small. To achieve this we demand that $\mu_2(\bsk)$ is large for all $\bsk \in \mathcal{D}$. Thus, instead of demanding large NRT weight, we now construct point sets with large $\mu_2$ weight (we note it can be shown that if $\mu_2(\bsk)$ is large, then $\mu_1(\bsk)$ has to be large too). A construction of point sets for which the dual space $\mathcal{D}$ has large weight $\mu_2(\bsk)$ will be shown below. (For technical reasons, \cite{DP13} uses $\mu_3$ for finite point sets and $\mu_5$ for sequences instead of $\mu_2$, but it is conjectured that $\mu_2$ is enough to 
achieve the optimal rate of convergence.) The following result is \cite[Theorem~2]{DP13} (note that a digital net which satisfies $\min_{\bsk \in \mathcal{D} \setminus \{\bszero\}} \mu_\alpha(\bsk) \ge \alpha m-t$ for some $\alpha \ge 3$ is a so-called order $3$ digital net).

\begin{theorem}[Dick and Pillichshammer, 2013]\label{thm_dp2}
Let $\alpha \in \mathbb{N}$ with $\alpha \ge 3$ and $\cP_{2^m,s}$ be a digital $(t,m,s)$-net over $\FF_2$ such that
\begin{equation*}
\min_{\bsk \in \mathcal{D} \setminus \{\bszero\}} \mu_\alpha(\bsk) \ge \alpha m-t.
\end{equation*}
Then the $\LL_2$ discrepancy of the point set $\cP_{2^m,s}$ can be bounded by
\begin{equation*}
\LL_{2, 2^m}(\cP_{2^m,s}) \ll_{\alpha, s} \frac{m^{\frac{s-1}{2}}}{2^{m-t}}.
\end{equation*}
\end{theorem}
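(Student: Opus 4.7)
The plan is to apply the Walsh-series machinery developed in Section~\ref{sec_walsh}. Expanding the local discrepancy function $\Delta_{\cP_{2^m,s}}$ into its Walsh series and invoking Parseval's identity together with the character identity \eqref{eq_char}, one arrives at the dual-space formula \eqref{eq_L2_dual},
$$\LL_{2,2^m}^2(\cP_{2^m,s}) = \sum_{\bsk,\bsl\in\mathcal{D}\setminus\{\bszero\}} r(\bsk,\bsl).$$
The coefficient estimate \eqref{ineq_bound_rkl} shows that $r(\bsk,\bsl)=\prod_j r(k_j,l_j)$ vanishes unless, in every coordinate $j$, either $k_j=l_j$ or $k_j\sim l_j$.

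I would then split the sum into the diagonal piece $\Sigma_{\mathrm{d}}=\sum_{\bsk\in\mathcal{D}\setminus\{\bszero\}} r(\bsk,\bsk)$ and the off-diagonal piece $\Sigma_{\mathrm{o}}$ consisting of pairs with at least one coordinate where $k_j\sim l_j$ but $k_j\neq l_j$. The diagonal piece is exactly what the digital $(t,m,s)$-net hypothesis handles via \eqref{bound_L2_diagonal}, giving
$$\Sigma_{\mathrm{d}}\ll_{s}\, 2^{-2(m-t)}(m-t+1)^{s-1},$$
already of the target order $m^{s-1}/2^{2(m-t)}$.

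The heart of the proof is the bound on $\Sigma_{\mathrm{o}}$. I would stratify the off-diagonal sum by the subset $J\subseteq\{1,\ldots,s\}$ of coordinates in which $k_j\sim l_j$ and $k_j\neq l_j$, and further by the similarity type ($k_j'=l_j'$, $k_j=l_j'$, or $k_j'=l_j$) in each $j\in J$. Outside $J$ one has $k_j=l_j$, so by $\FF_2$-linearity of $\mathcal{D}$ the vector $\bsm=\bsk+\bsl$ lies in $\mathcal{D}\setminus\{\bszero\}$, is supported on $J$, and on each $j\in J$ its digit pattern consists of at most two prescribed nonzero bits located at the top positions of $k_j$ and $l_j$. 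Using the per-coordinate near-diagonal estimate $|r(k_j,l_j)|\ll 2^{-\max(\mu_2(k_j),\mu_2(l_j))}$ and taking the product over $j$ controls $|r(\bsk,\bsl)|$ by an exponential in a sum of top exponents, while the hypothesis $\mu_\alpha(\bsm)\geq \alpha m-t$ with $\alpha\geq 3$ furnishes the matching lower bound on that sum. Summing the resulting geometric series over the free low-order digits of $\bsk$ (whose high-order digits are pinned by $\bsm$) then yields $\Sigma_{\mathrm{o}}\ll_{\alpha,s} 2^{-2(m-t)}m^{s-1}$, which combined with the diagonal bound completes the proof.

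The main obstacle will be the combinatorial bookkeeping of the similarity patterns and the careful transfer of the $\mu_\alpha$ constraint from the pair $(\bsk,\bsl)$ to the single dual vector $\bsm=\bsk+\bsl$. Once the structure is laid out, the estimation reduces to summing geometric progressions, but the hypothesis $\alpha\ge 3$ (rather than $\alpha=2$) enters essentially at this step: only then does the exponential decay provided by $\mu_\alpha\geq \alpha m-t$ outweigh the combinatorial count of admissible near-diagonal patterns and leave a factor polynomial in $m$ rather than a diverging one.
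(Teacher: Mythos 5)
Your overall framework (Walsh expansion, Parseval, the dual-space identity \eqref{eq_L2_dual}, and disposing of the diagonal via \eqref{bound_L2_diagonal}) is exactly the route sketched in Section~\ref{sec_walsh} and used in \cite{DP13}. The genuine gap is in your treatment of the off-diagonal sum: you propose to transfer the weight hypothesis to the single dual vector $\bsm=\bsk\oplus\bsl$ (digitwise sum over $\FF_2$; note integer addition is not the right operation) and then to sum ``freely'' over the low-order digits of $\bsk$. This is quantitatively too lossy. If $k_j\sim l_j$, the vector $m_j=k_j\oplus l_j$ has at most two nonzero digits, sitting at the leading positions where $k_j$ and $l_j$ differ, and it carries no information about the second-highest exponents $a_2,c_2$ of $k_j,l_j$, which is precisely what enters the bound $|r(k_j,l_j)|\ll 2^{-\max(a_1,c_2)-\max(a_2,c_1)}$. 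Concretely, in the case $k_j'=l_j'$ with $a_1>c_1$ one has $|r(k_j,l_j)|\ll 2^{-a_1-c_1}$, but the common tail $k_j''<2^{c_1-1}$ is free; summing over it (without using $\bsk\in\mathcal{D}$) already costs a factor $2^{c_1}$ and leaves only $2^{-a_1}=2^{-\mu_1(m_j)}$ per coordinate. Globally this yields at best something of order $2^{-\mu_1(\bsm)}$, and since $\mu_\alpha(\bsm)\ge \alpha m-t$ only forces $\mu_1(\bsm)\ge m-t/\alpha$, you end up with an off-diagonal contribution of order $2^{-m}$ (even the more optimistic accounting, $\sum_j\max(\mu_2(k_j),\mu_2(l_j))\ge\tfrac12\mu_\alpha(\bsm)$, gives only $2^{-3(m-t/3)/2}$ for $\alpha=3$), which does not reach the required $2^{-2(m-t)}m^{s-1}$. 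No choice of $\alpha$ repairs this, because the loss comes from discarding the membership of $\bsk$ itself in $\mathcal{D}$, not from the size of $\alpha$.

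The argument of \cite{DP13}, as indicated in Section~\ref{sec_walsh}, instead exploits that \emph{both} $\bsk$ and $\bsl$ lie in $\mathcal{D}$: one uses $|r(\bsk,\bsl)|\ll \prod_j 2^{-\max(\mu_2(k_j),\mu_2(l_j))}$ together with $\mu_2(\bsk)\ge\tfrac{2}{\alpha}\mu_\alpha(\bsk)\ge 2m-\tfrac{2t}{\alpha}$ (and likewise for $\bsl$), so that every admissible pair already decays like $2^{-2m}$ up to constants; the remaining work is a counting argument, for fixed $\bsk\in\mathcal{D}$, of the polynomially many $\bsl$ with $l_j\sim k_j$ or $l_j=k_j$ in each coordinate, plus a count of dual vectors by weight, and it is in making this count close with only an $m^{s-1}$ factor that the technical choice $\alpha\ge 3$ (rather than $\mu_2$) is needed. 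To repair your proof you should keep the parametrization $\bsl=\bsk\oplus\bsm$ if you like, but you must retain and use the constraint $\bsk\in\mathcal{D}$ (hence $\mu_\alpha(\bsk)\ge\alpha m-t$) when summing, rather than relying on $\mu_\alpha(\bsm)$ alone.
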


\subsubsection*{The extension to arbitrary $N$}

Theorems~\ref{csle2D} and \ref{thm_dp2} are thus far only valid for special values of $N$ of the form $b^m$ ($b$ a prime greater or equal to $2s^2$ in the case of Chen and Skriganov, and $b=2$ in the case of \cite{DP13}). A method of obtaining constructions for arbitrary $N$ has been shown in \cite{CS02}.


\begin{lemma}
\label{csle2E}
Let $b \ge 2$ and $m \ge 1$ be integers. Let $\cP_{b^m,s} = \{\bsx_0, \bsx_1,\ldots, \bsx_{b^m-1}\}$ be a point set in $[0,1)^s$. Assume that the projection of $\cP_{b^m,s}$ onto the first coordinate is a $(0,m,1)$-net in base $b$, i.e. let $\bsx_n = (x_{1,n}, x_{2,n}, \ldots, x_{s,n})$ and assume that $$| \{x_{1,0}, x_{1,1}, \ldots, x_{1,b^m-1}\} \cap [0, r b^{-m}) | = r$$ for all $0 \le r < b^m$.

Then for any integer $b^{m-1} < N \le b^m$ one can construct an $N$-element point set $\cP_{N,s}$ in $[0,1)^s$ such that the $\LL_2$ discrepancy of $\cP_{N,s}$ satisfies $$N \LL_{2,N}(\cP_{N,s})  \le  \sqrt{b} b^{m} \LL_{2, b^m}(\cP_{b^m,s}).$$
\end{lemma}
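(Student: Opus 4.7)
The plan is to construct $\cP_{N,s}$ by taking the $N$ points of $\cP_{b^m,s}$ with the smallest first coordinates and stretching that coordinate by the factor $b^m/N$, so that the range $[0, N/b^m)$ spreads back out to $[0,1)$. Using the $(0,m,1)$-net hypothesis on the first projection, I would first relabel the points of $\cP_{b^m,s}$ so that $x_{1,n} \in [n b^{-m}, (n+1) b^{-m})$ for every $n \in \{0,1,\ldots,b^m-1\}$, and then set
\begin{equation*}
\cP_{N,s} := \left\{\left(\tfrac{b^m}{N}\, x_{1,n},\, x_{2,n},\, \ldots,\, x_{s,n}\right) : 0 \le n \le N-1 \right\} \subset [0,1)^s,
\end{equation*}
the inclusion being clear since $x_{1,n} < N b^{-m}$ for $n \le N-1$.

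The heart of the argument will be a clean pointwise identity relating the two local discrepancy functions. For fixed $\bst \in [0,1]^s$ I would introduce $u_1 := (N/b^m) t_1 \in [0, N/b^m]$. The condition $(b^m/N) x_{1,n} < t_1$ is the same as $x_{1,n} < u_1$; for every $n \ge N$ the relabeling gives $x_{1,n} \ge n b^{-m} \ge N b^{-m} \ge u_1$, so those points never contribute to the count at $(u_1, t_2, \ldots, t_s)$. Hence
\begin{equation*}
A_N\bigl([\bszero,\bst),\cP_{N,s}\bigr) \;=\; A_{b^m}\bigl([\bszero,(u_1, t_2, \ldots, t_s)),\cP_{b^m,s}\bigr),
\end{equation*}
and since $N t_1 = b^m u_1$ this collapses into the identity
\begin{equation*}
N\,\Delta_{\cP_{N,s}}(\bst) \;=\; b^m\,\Delta_{\cP_{b^m,s}}(u_1, t_2, \ldots, t_s).
\end{equation*}

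To finish, I would square, integrate over $\bst \in [0,1]^s$, and change variables $t_1 \mapsto u_1$ (Jacobian $b^m/N$, new range $[0, N/b^m]$):
\begin{equation*}
N^2\, \LL_{2,N}(\cP_{N,s})^2 \;=\; \frac{b^{3m}}{N} \int_{[0, N/b^m] \times [0,1]^{s-1}} \Delta_{\cP_{b^m,s}}(\bst)^2 \rd \bst \;\le\; \frac{b^{3m}}{N}\, \LL_{2,b^m}(\cP_{b^m,s})^2,
\end{equation*}
the last step merely enlarging the domain of integration from $[0,N/b^m]\times[0,1]^{s-1}$ to $[0,1]^s$. Taking square roots and exploiting $N > b^{m-1}$, which gives $b^{3m}/N < b^{2m+1}$, then yields exactly $N \LL_{2,N}(\cP_{N,s}) \le \sqrt{b}\, b^m \LL_{2,b^m}(\cP_{b^m,s})$. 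The only nontrivial step is the pointwise identity, and even that is forced by the $(0,m,1)$-net structure, which guarantees that indices $\ge N$ fall outside the stretched range; I do not foresee any genuine obstacle.
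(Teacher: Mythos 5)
Your proposal is correct and follows essentially the same route as the paper: you select the $N$ points whose first coordinates lie in $[0,Nb^{-m})$ (guaranteed to be exactly $N$ by the $(0,m,1)$-net projection), stretch the first coordinate by $b^m/N$, relate the two discrepancy functions via the substitution $u_1=(N/b^m)t_1$, enlarge the integration domain, and use $N>b^{m-1}$ to absorb the factor $b^m/N<b$. The only cosmetic difference is that you isolate the pointwise identity $N\,\Delta_{\cP_{N,s}}(\bst)=b^m\,\Delta_{\cP_{b^m,s}}(u_1,t_2,\ldots,t_s)$ before integrating, whereas the paper carries out the same change of variables directly inside the $\LL_2$ integral.
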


We present the proof of Lemma~\ref{csle2E} in this survey since it explains the explicit construction of the point set $\cP_{N,s}$.

\begin{proof}[Proof of Lemma~\ref{csle2E}]
By the assumptions of the theorem, the point set $$\widetilde{\cP}_{N,s}:=\cP_{b^m,s} \cap
 \left(\left[0,N b^{-m}\right) \times [0,1)^{s-1}\right)$$
contains exactly $N$ points. We define the point set
$$\cP_{N,s}:=\left\{\left(N^{-1} b^{m} x_1,x_2,\ldots,x_s\right)\, :
\, (x_1,x_2,\ldots,x_s) \in \widetilde{\cP}_{N,s}\right\}.$$ Then we
have (with $\bsy=(y_1,y_2,\ldots,y_s)$)
\begin{eqnarray*}
\lefteqn{(N \LL_{2,N}(\cP_{N,s}))^2  =  \int_{[0,1]^s} \left|N \Delta_{\cP_{N,s}}(\bsy)\right|^2 \rd \bsy}\\
& = &  \int_{[0,1]^s} \left|A_N\left(\left[0,\frac{N}{b^{m}}y_1\right) \times \prod_{i=2}^s[0,y_i),\widetilde{\cP}_N\right)-b^{m} \frac{N}{b^{m}} y_1\cdots y_s \right|^2 \rd \bsy\\
& = & \frac{b^{m}}{N} \int_0^{N/b^{m}}\int_0^1 \cdots \int_0^1\left|A_N([\bszero,\bsy),\widetilde{\cP}_N)-b^{m} y_1\cdots y_s \right|^2 \rd \bsy\\
& = & \frac{b^{m}}{N} \int_0^{N/b^{m}}\int_0^1 \cdots \int_0^1\left|A_{b^{m}}([\bszero,\bsy),\cP_{b^m,s})-b^{m} y_1\cdots y_s \right|^2 \rd \bsy\\
& \le & \frac{b^{m}}{N} \left(b^{m} \LL_{2,b^{m}}(\cP_{b^m,s})\right)^2.
\end{eqnarray*}
We obtain
\begin{eqnarray*}
N \LL_{2,N}(\cP_{N,s}) \le \sqrt{b} b^{m} \LL_{2, b^m}(\cP_{b^m,s}).
\end{eqnarray*}
\end{proof}

Lemma~\ref{csle2E} can be directly applied to Theorems~\ref{csle2D} and \ref{thm_dp2}.
\begin{corollary}\label{cor_N}
Let $\cP_{b^m,s}$ be a $b^m$-element point set in $[0,1)^s$ whose projection onto the first coordinate is a $(0,m,1)$-net in base $b$ and which satisfies
\begin{equation*}
\LL_{2,b^m}(\cP_{b^m,s}) \ll_{s,b} \frac{m^{\frac{s-1}{2}}}{b^{m-t}}.
\end{equation*}
Then for any $b^{m-1} < N \le b^m$ there exists an $N$-element point set $\cP_{N,s}$ in $[0,1)^s$ such that
\begin{equation*}
\LL_{2, N}(\cP_{N,s}) \ll_{s,b} b^t\frac{(\log N)^{\frac{s-1}{2}}}{N}.
\end{equation*}
\end{corollary}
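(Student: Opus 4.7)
The plan is to apply Lemma~\ref{csle2E} directly and then to translate the exponent $m$ into a logarithm in $N$. First I would invoke Lemma~\ref{csle2E} with the given $\cP_{b^m,s}$, which is applicable because the hypothesis on the projection onto the first coordinate being a $(0,m,1)$-net in base $b$ is exactly what Lemma~\ref{csle2E} requires. This produces, for each $N$ with $b^{m-1} < N \le b^m$, an $N$-element point set $\cP_{N,s}$ satisfying
\begin{equation*}
N\,\LL_{2,N}(\cP_{N,s}) \;\le\; \sqrt{b}\, b^m \,\LL_{2,b^m}(\cP_{b^m,s}).
\end{equation*}

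Next I would substitute in the assumed bound $\LL_{2,b^m}(\cP_{b^m,s}) \ll_{s,b} m^{(s-1)/2}/b^{m-t}$. Cancelling the factor $b^m$ against $b^{m-t}$ gives
\begin{equation*}
N\,\LL_{2,N}(\cP_{N,s}) \;\ll_{s,b}\; \sqrt{b}\, b^t\, m^{(s-1)/2} \;\ll_{s,b}\; b^t\, m^{(s-1)/2},
\end{equation*}
where the factor $\sqrt{b}$ is absorbed into the implied constant depending on $b$.

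Finally, I would convert $m$ to a logarithm of $N$. The condition $b^{m-1} < N \le b^m$ yields $m-1 < \log_b N \le m$, hence $m \le \log_b N + 1 = (\log N)/(\log b) + 1$, and in particular $m \ll_b \log N$ for $N$ large enough (with the case of small $N$ being trivial by adjusting the constant). Therefore
\begin{equation*}
\LL_{2,N}(\cP_{N,s}) \;\ll_{s,b}\; b^t\,\frac{(\log N)^{(s-1)/2}}{N},
\end{equation*}
which is the stated bound.

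There is no serious obstacle here: the corollary is essentially a bookkeeping consequence of Lemma~\ref{csle2E}. The only point requiring minor care is the passage $m \mapsto \log N$, where one has to verify that the implied constant can be absorbed uniformly in $N$ (including small values of $N$ and the logarithmic factor when $s=1$, in which case $(\log N)^{(s-1)/2}=1$ and the estimate is still valid).
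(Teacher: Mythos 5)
Your proposal is correct and follows exactly the route the paper intends: apply Lemma~\ref{csle2E} (whose hypothesis on the first-coordinate projection you correctly verify), insert the assumed bound so that $b^m$ cancels against $b^{m-t}$ up to $\sqrt{b}\,b^t$, and use $b^{m-1}<N\le b^m$ (hence $N\ge 2$) to replace $m$ by a constant multiple of $\log N$. The paper gives no separate proof of the corollary beyond the remark that Lemma~\ref{csle2E} applies directly, so your bookkeeping fills in precisely the intended argument.
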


\section{The construction of finite point sets according to Chen and Skriganov}\label{sec_CS}

In this section we present Chen and Skriganov's \cite{CS02} explicit construction of finite point sets with optimal order of $\LL_2$ discrepancy. Our presentation uses the concept of digital nets rather than the original definition given in \cite{CS02}. See \cite[Corollary~16.29]{DP10} for the proof of the equivalence of these constructions.

Let $\alpha,s, m \in \mathbb{N}$ and $b$ be a prime satisfying $b \ge \alpha s$. There exist $\alpha s$ distinct elements $\beta_{i,l} \in \mathbb{F}_b$ for $i=1,\ldots, s$ and $l = 1,\ldots, \alpha$. We define $\alpha m \times \alpha m$ generating matrices $C_1,\ldots ,C_s$ over $\FF_b$ by $$C_i=(c_{u,v}^{(i)})_{u,v=1}^{\alpha m},$$  with $$c_{(l-1) m +j,k}^{(i)}={k-1 \choose j-1} \beta_{i,l}^{k-j}$$ for $j=1, \ldots , m$, $l=1, \ldots, \alpha$, $k=1, \ldots, \alpha m$ and $i=1, \ldots ,s$, where ${i \choose j}$ denotes a binomial coefficient modulo $b$ with the usual convention that ${i \choose j}=0$ whenever $j>i$ and $0^0 :=1$. Let $\cP^{(\alpha,m)}$ be the digital net over $\mathbb{F}_b$ generated by $C_1,\ldots, C_s$. Let $\mathcal{D}$ denote the dual net of $\cP^{(\alpha, m)}$. Note that for $\alpha = 1$ the generating matrices defined above were first introduced by Faure~\cite{fau82} (see also \cite[p.~92]{niesiam}).

\begin{theorem}[Chen and Skriganov, 2002]\label{lecs2Ev1}
For every prime $b$ and $\alpha, m, s \in \NN$ satisfying $b \ge \alpha s$, the digital net $\cP^{(\alpha,m)}$ is a $(0, \alpha m, s)$-net over $\mathbb{F}_b$ which satisfies $$\min_{\bsk \in (\mathcal{D}\setminus\{\bszero\}) \cap \{0, 1, \ldots, b^{\alpha m}-1\}^s} \varkappa(\bsk) \ge \alpha + 1.$$
\end{theorem}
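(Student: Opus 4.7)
The plan is to identify each row of the generating matrices $C_i$ with a Hasse-derivative evaluation functional on $\FF_b[X]_{<\alpha m}$, the space of polynomials of degree less than $\alpha m$, and to reduce both assertions to the non-singularity of the generalized Hermite--Vandermonde determinant at distinct nodes. Given $P(X)=\sum_{k=1}^{\alpha m} a_k X^{k-1}$ with coefficient vector $\vec{a}=(a_1,\ldots,a_{\alpha m})^\top$, the row of $C_i$ indexed by $(l-1)m+j$ applied to $\vec{a}$ equals exactly $D^{(j-1)}P(\beta_{i,l})$, where $D^{(j)}[X^n]=\binom{n}{j}X^{n-j}$ denotes the $j$th Hasse derivative. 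The key external fact I would invoke throughout is that, for pairwise distinct nodes $\beta_1,\ldots,\beta_B$ in any field and multiplicities $r_1,\ldots,r_B$, the Hermite--Vandermonde matrix formed from Hasse derivatives of consecutive orders $0,1,\ldots,r_\rho-1$ at each $\beta_\rho$ has determinant $\pm\prod_{\rho<\rho'}(\beta_\rho-\beta_{\rho'})^{r_\rho r_{\rho'}}$, which is nonzero by distinctness and, crucially, contains no factorials, so it remains valid over $\FF_b$. This is precisely why Hasse (rather than ordinary) derivatives are used in the definition of $C_i$.

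For the $(0,\alpha m,s)$-net property I would apply Lemma~\ref{lem_net_dig_net}. For any $d_1,\ldots,d_s\ge 0$ with $\sum_{i=1}^s d_i=\alpha m$, writing $d_i=q_i m+r_i$ with $0\le r_i<m$, the first $d_i$ rows of $C_i$ realize Hasse-derivative functionals of all orders $0,\ldots,m-1$ at each of $\beta_{i,1},\ldots,\beta_{i,q_i}$, together with orders $0,\ldots,r_i-1$ at $\beta_{i,q_i+1}$ (if $r_i>0$). Aggregated over $i=1,\ldots,s$, these yield precisely $\alpha m$ functionals at pairwise distinct nodes (possible because $b\ge\alpha s$ allows $\alpha s$ distinct elements of $\FF_b$), with a consecutive block of orders at each node. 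The Hermite--Vandermonde determinant is then nonzero, so the associated $\alpha m\times\alpha m$ matrix is invertible, the rows are linearly independent over $\FF_b$, and Lemma~\ref{lem_net_dig_net} delivers $t=0$.

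For the Hamming weight bound I would reformulate the claim dually. Setting $\widetilde{C}=(C_1^\top\mid\cdots\mid C_s^\top)^\top\in\FF_b^{s\alpha m\times\alpha m}$, the condition $\bsk\in\mathcal{D}$ amounts to the concatenated vector $(\vec{k}_1,\ldots,\vec{k}_s)\in\FF_b^{s\alpha m}$ lying in the left nullspace of $\widetilde{C}$, and $\varkappa(\bsk)$ is the number of its nonzero entries. It therefore suffices to show that any $\alpha$ rows of $\widetilde{C}$ are linearly independent over $\FF_b$. Any such selection involves at most $\alpha$ distinct nodes $\beta_{i,l}$, each carrying Hasse-derivative orders in some subset $J_\beta\subseteq\{0,1,\ldots,m-1\}$. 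Replacing each $J_\beta$ by the consecutive block $\widetilde{J}_\beta=\{0,1,\ldots,\max J_\beta\}$ of size at most $m$ produces a Hermite system at the same distinct nodes with at most $\alpha\cdot m=\alpha m$ functionals in total; the Hermite--Vandermonde fact yields linear independence of this extended system on $\FF_b[X]_{<\alpha m}$, and the original sub-collection of $\le\alpha$ functionals is linearly independent a fortiori.

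The main difficulty in the argument is establishing the Hermite--Vandermonde determinant formula cleanly in positive characteristic $b$; once this is in hand (and the use of Hasse derivatives is exactly what makes it valid), both parts of the theorem follow from the straightforward dictionary between the matrix picture and the Hermite-interpolation picture sketched above.
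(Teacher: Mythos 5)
Your argument is correct, but note that the survey itself contains no proof of Theorem~\ref{lecs2Ev1}: it only states the result and points to \cite{CS02}, \cite{CS3}, \cite{Skr} and \cite[Chapter~16]{DP10}, so there is no in-paper proof to compare against. Your route --- reading the row of $C_i$ indexed by $(l-1)m+j$ as the Hasse-derivative functional $P\mapsto D^{(j-1)}P(\beta_{i,l})$ on $\FF_b[X]_{<\alpha m}$, deducing the $(0,\alpha m,s)$-net property via Lemma~\ref{lem_net_dig_net} from unisolvence of Hermite interpolation at the $\alpha s$ distinct nodes, and deducing the Hamming-weight bound from the equivalence ``$\varkappa(\bsk)\ge\alpha+1$ for all nonzero $\bsk\in\mathcal{D}$ iff any $\alpha$ rows of the stacked matrix are linearly independent'' --- is essentially the standard argument carried out in those references, and all the reductions (left-nullspace reformulation, counting $q_im+r_i$, distinctness of the $\beta_{i,l}$ from $b\ge\alpha s$) are correct. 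One small point to tighten: the Hermite--Vandermonde determinant identity you invoke is a statement about square systems, whereas your extended system $\{\widetilde J_\beta\}$ in the weight argument may comprise strictly fewer than $\alpha m$ functionals; either pad one node's consecutive block of orders until the total is exactly $\alpha m$ (all Hasse functionals of order below $\alpha m$ are admissible) and then apply the determinant formula, or argue directly that any $P$ of degree $<\alpha m$ annihilated by the extended system is divisible by $\prod_\beta (X-\beta)^{|\widetilde J_\beta|}$, so the kernel has codimension equal to the number of functionals and they are linearly independent. With that cosmetic repair both parts of your proof are complete and valid in characteristic $b$, which is exactly the point of using Hasse rather than ordinary derivatives.
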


\begin{example}\rm
For $\alpha =m = s= 2$ we may choose $b=5$, i.e., we  obtain a $(0,4,2)$-net $\cP^{(2,2)}$ over $\FF_5$. We choose different $\beta_{1,1},\beta_{1,2},\beta_{2,1},\beta_{2,2}\in \FF_5$. Then the $i$th generating matrix, $i \in \{1,2\}$ is given by $$C_i=\left(\begin{array}{cccc}
{0 \choose 0} \beta_{i,1}^0 & {1 \choose 0} \beta_{i,1}^1 & {2 \choose 0} \beta_{i,1}^2 & {3 \choose 0} \beta_{i,1}^3\\
0 &  {1 \choose 1} \beta_{i,1}^0 & {2 \choose 1} \beta_{i,1}^1 & {3 \choose 1} \beta_{i,1}^2\\
{0 \choose 0} \beta_{i,2}^0 & {1 \choose 0} \beta_{i,2}^1 & {2 \choose 0} \beta_{i,2}^2 & {3 \choose 0} \beta_{i,2}^3\\
0 &  {1 \choose 1} \beta_{i,2}^0 & {2 \choose 1} \beta_{i,2}^1 & {3 \choose 1} \beta_{i,2}^2
\end{array}\right).$$ For example, if we choose $\beta_{1,1}=0$, $\beta_{1,2}=1$, $\beta_{2,1}=2$ and $\beta_{2,2}=3$, then $$C_1=\left(\begin{array}{cccc}
1 & 0 & 0 & 0\\
0 & 1 & 0 & 0\\
1 & 1 & 1 & 1\\
0 & 1 & 2 & 3
\end{array}\right)\;\;\; \mbox{ and }\;\;\;C_2=\left(\begin{array}{cccc}
1 & 2 & 4 & 3\\
0 & 1 & 4 & 2\\
1 & 3 & 4 & 2\\
0 & 1 & 1 & 2
\end{array}\right) .$$
\end{example}

Under the assumption that $\alpha = 2s$, Theorem~\ref{lecs2Ev1} in conjunction with Corollary~\ref{cor_N} gives, for every $N,s \in \NN$ with $N \ge 2$, an explicit construction of an $N$-element point set $\cP_{N,s}$ in $[0,1)^s$ whose $\LL_2$ discrepancy is best possible with respect to the general lower bound due to Roth from Theorem~\ref{thm1}.

We close this section by remarking that a simplified version of the proofs in \cite{CS02} can be found in \cite{CS3} and in \cite{Skr}. See also \cite[Chapter~16]{DP10} for an overview.

\section{The construction of infinite sequences according to Dick and Pillichshammer}\label{sec_DP}

In this section we first present an alternate explicit construction of finite point sets with optimal $\LL_2$ discrepancy and also an explicit construction of an infinite sequence with optimal order of $\LL_2$ discrepancy in the sense of Proinov's lower bound from Theorem~\ref{thm2}. These constructions are based on digital nets and  sequences over the finite field $\FF_2$ of order $2$ (independent of the dimension $s$) in contrast to the construction of Chen and Skriganov which relied on a finite field $\FF_b$ with $b \ge 2s^2+1$.

We briefly recall a special case of generalized Niederreiter sequences as introduced by Tezuka~\cite{Tez93}. This construction is closely related to Sobol's~\cite{sob67} and Niederreiter's~\cite{nie87} construction for the generating matrices of digital sequences over $\FF_2$. We explain how to construct the entries $c_{j,k,\ell} \in \FF_2$ of the generator matrices $C_j = (c_{j,k,\ell})_{k,\ell \ge 1}$ for $j=1,2,\ldots,s$. To this end choose the polynomials $p_1=x$ and $p_j \in \FF_2[x]$ for $j =2,\ldots,s$ to be the $(j-1)$th irreducible polynomial in a list of irreducible polynomials over $\FF_2$ that is sorted in increasing order according to their degree $e_j = \deg(p_j)$, that is, $e_2 \le e_3 \le \cdots \le e_{s-1}$ (the ordering of polynomials with the same degree is irrelevant). We also put $e_1=\deg(x)=1$.

Let $j \in \{1,\ldots,s\}$ and $k \in \NN$. Take $i-1$ and $z$ to be respectively the main term and remainder when we divide $k-1$ by $e_j$, so that   $k-1  = (i-1) e_j + z$, with $0 \le z < e_j$. Now consider the Laurent series expansion
\begin{equation*}
\frac{x^{e_j-z-1}}{p_j(x)^i} = \sum_{\ell =1}^\infty a_\ell(i,j,z) x^{-\ell} \in \FF_2((x^{-1})).
\end{equation*}
For $\ell \in \mathbb{N}$ we set
\begin{equation}\label{def_sob_mat}
c_{j,k,\ell} = a_\ell(i,j,z).
\end{equation}
Every digital sequence with generating matrices $C_j = (c_{j,k,\ell})_{k,\ell \ge 1}$ for $j=1,2,\ldots,s$ found in this way is a special instance of a so-called generalized Niederreiter sequence (see \cite{Tez93} or \cite[Chapter~8]{DP10} for an overview). Note that in the construction above we always have $c_{j,k,\ell}=0$ for all $k > \ell$. Further we remark that it is well known, see for instance \cite[Theorem~4.49]{niesiam}, that Sobol's and Niederreiter's sequence are digital $(t,s)$-sequences with
\begin{equation*}\label{t_digseq}
t = \sum_{j=1}^s (e_j-1).
\end{equation*}
For generalized Niederreiter sequences this result was shown by Tezuka~\cite{Tez93}.

For the construction of the desired sequences we need the following definition.

\begin{definition}\rm
For $\alpha \in \NN$ the {\it digit interlacing composition} (with interlacing factor $\alpha$) is defined by
\begin{eqnarray*}
\mathscr{D}_\alpha: [0,1)^{\alpha} & \to & [0,1) \\
(x_1,\ldots, x_{\alpha}) &\mapsto & \sum_{a=1}^\infty \sum_{r=1}^\alpha
\xi_{r,a} 2^{-r - (a-1) \alpha},
\end{eqnarray*}
where $x_r \in [0,1)$ has dyadic expansion of the form $x_r = \xi_{r,1} 2^{-1} + \xi_{r,2} 2^{-2} + \cdots$ for $r=1, \ldots , \alpha$. We also define this function for vectors by setting
\begin{eqnarray*}
\mathscr{D}_\alpha^s: [0,1)^{\alpha s} & \to & [0,1)^s \\
(x_1,\ldots, x_{\alpha s}) &\mapsto & (\mathscr{D}_\alpha(x_1,\ldots, x_\alpha),  \ldots, \mathscr{D}_\alpha(x_{(s-1) \alpha +1},\ldots, x_{\alpha s})),
\end{eqnarray*}
for point sets $\cP_{N,\alpha s} = \{\bsx_0,\bsx_1, \ldots, \bsx_{N-1}\} \subseteq [0,1)^{\alpha s}$ by setting
\begin{equation*}
\mathscr{D}_\alpha^s(\cP_{N,\alpha s}) = \{\mathscr{D}_\alpha^s(\bsx_0), \mathscr{D}_\alpha^s(\bsx_1), \ldots, \mathscr{D}_\alpha^s(\bsx_{N-1})\}\subseteq[0,1)^s
\end{equation*}
and for sequences $\cS_{\alpha s} = (\bsx_0, \bsx_1, \ldots)$ with $\bsx_n \in [0,1)^{\alpha s}$ by setting
\begin{equation*}
\mathscr{D}_\alpha^s(\cS_{\alpha s}) = (\mathscr{D}_\alpha^s(\bsx_0), \mathscr{D}_\alpha^s(\bsx_1), \ldots).
\end{equation*}
\end{definition}

We comment here that if $\cP_{N,\alpha s}$ and $\cS_{\alpha s}$ are digital nets and sequences, respectively, then the interlacing can also be applied to the generating matrices $C_1,\ldots, C_{\alpha s}$ directly as described in \cite[Section~4.4]{D08}. This is done in the following way. Let $C_1, \ldots, C_{\alpha s}$ be generating matrices of a digital net or sequence and let $\vec{c}_{j,k}$ denote the $k$th row of $C_j$. Then the  matrices $E_1,\ldots, E_s$, where the $k$th row $\vec{e}_{j,k}$ of $E_j$ is given by
\begin{equation*}
\vec{e}_{j,u\alpha + v} = \vec{c}_{(j-1) \alpha + v, u+1}
\end{equation*}
for all $j=1, \ldots,s$ and $v=1, \ldots , \alpha$, and $u \ge 0$ are the generating matrices of $\mathscr{D}_\alpha^s(\cP_{N,\alpha s})$ or $\mathscr{D}_\alpha^s(\cS_{\alpha s})$ respectively. In particular, $\mathscr{D}_\alpha^s(\cP_{N,\alpha s})$ is an $s$-dimensional digital net and $\mathscr{D}_\alpha^s(\cS_{\alpha s})$ is an $s$-dimensional digital sequence.

Above we assumed that $c_{j,k,\ell}=0$ for all $k > K(\ell)$. Let $E_j = (e_{j,k,\ell})_{k, \ell \in \mathbb{N}}$. Then the interlacing construction yields that $e_{j,k,\ell} = 0$ for all $k > \alpha K(\ell)$, where $\alpha$ is the interlacing factor.

Let $\cS_{\alpha s}$ be a digital $(t,\alpha s)$-sequence. It follows from \cite[Definition~2 and Proposition~1]{DP13} that the first $2^m$ points of $\mathscr{D}_\alpha^s(\cS_{\alpha s})$ satisfy
\begin{equation*}
\min_{\bsk \in \mathcal{D} \setminus \{\bszero\}} \mu_\alpha(\bsk) \ge \alpha m - t_\alpha,
\end{equation*}
where
\begin{equation*}
t_\alpha = \alpha t + s {\alpha \choose 2}.
\end{equation*}
Thus Theorem~\ref{thm_dp2} implies the following result.
\begin{theorem}\label{thm4}
Let $\alpha, m, s \in \NN$ with $\alpha \ge 3$ and let $\cP_{2^m,\alpha s}$ be a digital $(t,m,\alpha s)$-net over $\FF_2$. Then the $\LL_2$ discrepancy of the digital net $\cP^{(\alpha)}_{2^m,s} = \mathscr{D}_\alpha^s(\cP_{2^m,\alpha s})$ satisfies $$\LL_{2,2^m}(\cP^{(\alpha)}_{2^m,s}) \ll_{\alpha, s, t} \frac{m^{\frac{s-1}{2}}}{2^m}.$$
\end{theorem}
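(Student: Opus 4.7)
The plan is to deduce Theorem~\ref{thm4} directly from Theorem~\ref{thm_dp2} by verifying its hypothesis for the interlaced digital net $\cP^{(\alpha)}_{2^m,s}=\mathscr{D}_\alpha^s(\cP_{2^m,\alpha s})$ with parameter $t_\alpha := \alpha t + s\binom{\alpha}{2}$. That hypothesis has two clauses --- $\cP^{(\alpha)}_{2^m,s}$ must be a digital $(t_\alpha,m,s)$-net over $\FF_2$, and its dual $\mathcal{D}$ must satisfy $\min_{\bsk\in\mathcal{D}\setminus\{\bszero\}}\mu_\alpha(\bsk)\ge\alpha m-t_\alpha$ --- but the first follows from the second, because the pointwise inequality $\mu_\alpha(k)\le\alpha\mu_1(k)$ (every $a_i$ in \eqref{exp_k} is at most $a_1$) extends coordinatewise to $\mu_\alpha(\bsk)\le\alpha\mu_1(\bsk)$, so that $\mu_1(\bsk)\ge m-t_\alpha/\alpha\ge m-t_\alpha+1$ (this last inequality holds since $\alpha\ge 3$ and $t_\alpha\ge s\binom{\alpha}{2}\ge 3$), and Lemma~\ref{lem_net_dig_net} then supplies the $(t_\alpha,m,s)$-net certificate. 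Once the $\mu_\alpha$ bound is in hand, Theorem~\ref{thm_dp2} yields $\LL_{2,2^m}(\cP^{(\alpha)}_{2^m,s})\ll_{\alpha,s}m^{(s-1)/2}/2^{m-t_\alpha}$, and since $t_\alpha$ depends only on $\alpha,s,t$ the factor $2^{t_\alpha}$ is absorbed into the implied constant, giving the stated rate.

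To set up the $\mu_\alpha$ estimate I would first read off the generating matrices $E_1,\ldots,E_s$ of $\cP^{(\alpha)}_{2^m,s}$ from the generating matrices $C_1,\ldots,C_{\alpha s}$ of $\cP_{2^m,\alpha s}$ via the row-interleaving rule $\vec{e}_{j,u\alpha+v}=\vec{c}_{(j-1)\alpha+v,u+1}$ recorded in the excerpt. This identification produces a natural bijection between $\mathcal{D}$ and the dual $\mathcal{D}_0$ of $\cP_{2^m,\alpha s}$: a vector $\bsk=(k_1,\ldots,k_s)\in\mathcal{D}$ is paired with the de-interleaved vector $\widetilde{\bsk}=(k_{1,1},\ldots,k_{1,\alpha},\ldots,k_{s,1},\ldots,k_{s,\alpha})\in\mathcal{D}_0$, where $k_{j,v}$ collects the binary digits of $k_j$ lying in positions $v,v+\alpha,v+2\alpha,\ldots$. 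The $(t,m,\alpha s)$-net hypothesis on $\cP_{2^m,\alpha s}$ together with Lemma~\ref{lem_net_dig_net} then gives $\mu_1(\widetilde{\bsk})\ge m-t+1$ for every nonzero $\widetilde{\bsk}\in\mathcal{D}_0$.

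The crux is the per-coordinate digit-counting inequality $\mu_\alpha(k_j)\ge\alpha\,\mu_1(k_{j,1},\ldots,k_{j,\alpha})-\binom{\alpha}{2}$. Its proof is combinatorial bookkeeping: if $k_{j,v}\ne 0$ has its leading nonzero binary digit at position $a^{(j,v)}$, then by the interleaving rule that digit occupies position $(a^{(j,v)}-1)\alpha+v$ of $k_j$; these positions --- one per nonzero strand --- are pairwise distinct, and their sum $\alpha\sum_v a^{(j,v)}-\alpha\nu'+\sum_{v\in S}v$ (where $S$ indexes the nonzero strands and $\nu'=|S|\le\alpha$) is a lower bound for $\mu_\alpha(k_j)$. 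A short calculation, using the spacing $b_i\ge\alpha-i+1$ of distinct nonzero digit positions within a single strand to cover the case $\nu'<\alpha$, shows the overall deficit is at most $\binom{\alpha}{2}$ per coordinate. Summing over $j=1,\ldots,s$ and combining with $\mu_1(\widetilde{\bsk})\ge m-t+1$ yields $\mu_\alpha(\bsk)\ge\alpha(m-t+1)-s\binom{\alpha}{2}\ge\alpha m-t_\alpha$, completing the verification. The main obstacle is precisely this per-coordinate bookkeeping, in particular the case analysis when some strands $k_{j,v}$ vanish so that fewer than $\alpha$ leading positions are harvested directly from the strands; this is the technical heart of the argument and is the content of Proposition~1 of \cite{DP13}.
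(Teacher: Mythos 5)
Your proposal is correct and takes essentially the same route as the paper: the survey's own proof of Theorem~\ref{thm4} consists precisely of the interlacing bound $\min_{\bsk\in\mathcal{D}\setminus\{\bszero\}}\mu_\alpha(\bsk)\ge \alpha m-t_\alpha$ with $t_\alpha=\alpha t+s\binom{\alpha}{2}$ (cited from \cite[Definition~2 and Proposition~1]{DP13}, whose de-interleaving digit-count you sketch) followed by an application of Theorem~\ref{thm_dp2}, with $2^{t_\alpha}$ absorbed into the implied constant. Your additional remark that the digital $(t_\alpha,m,s)$-net property follows from the $\mu_\alpha$ bound via $\mu_\alpha(\bsk)\le\alpha\mu_1(\bsk)$ is a correct piece of bookkeeping that the survey leaves implicit.
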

The proof of this result can be found in \cite[Section~ 4]{DP13}. 



To construct finite point sets for any integer $N \ge 2$ we proceed in the following way. Let $m \in \NN$ be such that $2^{m-1} <N \le 2^m$ and let $\bsx_0, \bsx_1,\ldots, \bsx_{2^m-1} \in [0,1]^{3s-1}$ be the first $2^m$ points from the Sobol' or Niederreiter sequence in dimension $3s-1$ as introduced above with $p_1=x$ and $p_2= 1+ x$. Let $\bsx_n = (x_{1,n},\ldots, x_{3s-1,n})$ and define $\bsy_n = (n 2^{-m}, x_{1,n},\ldots, x_{3s-1,n}) \in [0,1)^{3s}$. Let now
\begin{equation}\label{defps2}
\cP^{(3)}_{2^m,s} = \{\mathscr{D}_3(\bsy_0), \mathscr{D}_3(\bsy_1), \ldots, \mathscr{D}_3(\bsy_{2^m})\}.
\end{equation}
It was shown in \cite{DP13} that the projection of this point set onto the first coordinate is a $(0,m,1)$-net in base 2.

Choose $m \in \NN$ such that $2^{m-1} < N \le 2^m$ and take the subset
$$\widetilde{\cP}_{N,s}:=\cP_{2^m,s} \cap
\left(\left[0,\frac{N}{2^m}\right) \times [0,1)^{s-1}\right)$$
which contains exactly $N$ points since the projection onto the first coordinate is $(0,m,1)$-net. Then define the point set
\begin{equation}\label{pspropcs}
\cP_{N,s}^{(3)} :=\left\{\left(2^m N^{-1} x_1,x_2,\ldots,x_s\right)\, :
\, (x_1,x_2,\ldots,x_s) \in \widetilde{\cP}_{N,s}\right\}.
\end{equation}
Corollary~\ref{cor_N} now implies the following result.
\begin{theorem}\label{thm5}
For every $N,s \in \NN$, $N \ge 2$, the $\LL_2$ discrepancy of the point set $\cP^{(3)}_{N,s}$ given by \eqref{pspropcs} satisfies $$\LL_{2,N}(\cP^{(3)}_{N,s}) \ll_{s,t} \frac{(\log N)^{\frac{s-1}{2}}}{N}.$$
\end{theorem}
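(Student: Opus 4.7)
The plan is to deduce Theorem~\ref{thm5} directly from Corollary~\ref{cor_N} with $b=2$. Observe that the construction \eqref{pspropcs} is precisely the rescaling device whose discrepancy is controlled in Lemma~\ref{csle2E}, with $m$ chosen so that $2^{m-1} < N \le 2^m$ and with the base point set taken to be $\cP^{(3)}_{2^m,s}$ from \eqref{defps2}. Thus the entire argument reduces to verifying the two hypotheses of Corollary~\ref{cor_N}: (i) the $\LL_2$ discrepancy bound $\LL_{2,2^m}(\cP^{(3)}_{2^m,s}) \ll_{s,t} m^{(s-1)/2}/2^m$, and (ii) that the first-coordinate projection of $\cP^{(3)}_{2^m,s}$ is a $(0,m,1)$-net in base $2$.

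For (i), my first step would be to invoke the standard sequence-to-net augmentation: if $(\bsx_n)_{n \ge 0}$ is a digital $(t,3s-1)$-sequence over $\FF_2$, then the finite set $\{\bsy_n = (n/2^m, \bsx_n) : 0 \le n < 2^m\}$ forms a digital $(t,m,3s)$-net over $\FF_2$, since prepending $n/2^m$ corresponds to adjoining the $m \times m$ anti-diagonal matrix as an additional generating matrix, which together with the left-upper $m \times m$ blocks of the original $C_j$ retains linear independence in the sense of Lemma~\ref{lem_net_dig_net}. Applied to the Sobol'/Niederreiter sequence in dimension $3s-1$ with $p_1=x$ and $p_2 = 1+x$, this yields a digital $(t,m,3s)$-net with $t = \sum_{j=1}^{3s-1}(e_j-1)$. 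Interlacing with factor $\alpha=3$ produces $\cP^{(3)}_{2^m,s} = \mathscr{D}_3^s(\{\bsy_n\})$, and Theorem~\ref{thm4} then delivers (i).

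For (ii), I would unpack the definition of $\mathscr{D}_3$: the first coordinate of the $n$-th point of $\cP^{(3)}_{2^m,s}$ is $\mathscr{D}_3(n/2^m, x_{1,n}, x_{2,n})$, and the bits in positions $1,4,7,\ldots$ of its dyadic expansion come verbatim from the bits of $n/2^m$. A short digit-counting argument along the lines of \cite{DP13} then shows that for every $0 \le r < 2^m$ the elementary interval $[r 2^{-m}, (r+1) 2^{-m})$ contains exactly one such first coordinate, which is the $(0,m,1)$-net property. This bit-level bookkeeping is the only place where a concrete combinatorial check must be carried out, and I expect it to be the main obstacle -- the difficulty is precisely to be sure that the interlaced bit pattern reaches every $m$-bit prefix exactly once. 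Once (i) and (ii) are both established, Corollary~\ref{cor_N} immediately yields $\LL_{2,N}(\cP^{(3)}_{N,s}) \ll_{s,t} (\log N)^{(s-1)/2}/N$, completing the proof; all conceptual depth has already been absorbed by Theorems~\ref{thm_dp2} and~\ref{thm4}, so the present argument is essentially a packaging step.
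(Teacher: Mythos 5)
Your proposal takes essentially the same route as the paper: reduce to Corollary~\ref{cor_N} with $b=2$, get the bound for the full $2^m$-point interlaced net from Theorem~\ref{thm4} (after observing that prepending $n2^{-m}$ to the first $2^m$ points of the $(t,3s-1)$-sequence yields a digital $(t,m,3s)$-net), and invoke the $(0,m,1)$-net property of the first-coordinate projection before rescaling as in Lemma~\ref{csle2E}. The one step you leave as a ``digit-counting argument'' --- that the interlaced first coordinate is a $(0,m,1)$-net in base $2$ --- is exactly the step the survey itself does not prove but cites from \cite{DP13}, so your argument is on the same footing as the paper's.
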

The proof of this result can be found in \cite[Section~ 4]{DP13}.

\subsubsection*{The result for sequences}

It has been shown in \cite{DP13} that the sequence $\mathscr{D}_5^s(\cS_{5s})$, where $\cS_{5s}$ is a digital sequence over $\FF_2$ in dimension $5s$ constructed as presented above, has optimal order of $\LL_2$ discrepancy. More detailed, we have the following result.

\begin{theorem}\label{thm3}
For any $s \in \NN$ let $\widetilde{\cS}_{5s}$ be a digital sequence over $\FF_2$ in dimension $5s$ constructed as presented above and let $\cS_s:=\mathscr{D}_5^s(\widetilde{\cS}_{5s})$ be the interlaced version of this sequence in $[0,1)^s$. Then for all $N \ge 2$ we have
\begin{equation*}
\LL_{2,N}(\cS_s) \ll_s  \frac{(\log N)^{(s-1)/2}}{N} \sqrt{S(N)} \ll_s \frac{(\log N)^{s/2}}{N},
\end{equation*}
where $S(N)$ is the sum-of-digits function of $N$ in base 2 representation, i.e. if $N = 2^{m_1} + 2^{m_2} + \cdots + 2^{m_r}$ with $m_1 > m_2 > \cdots > m_r \ge 0$, then $S(N)=r$. Obviously, we have $S(N) \le 1+(\log N)/(\log 2)$ for all $N \in \NN$.
\end{theorem}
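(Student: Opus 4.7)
The plan is a dyadic block decomposition of the sequence, coupled with the Walsh-series framework of Section~\ref{sec_walsh}. Write $N = 2^{m_1} + 2^{m_2} + \cdots + 2^{m_r}$ with $m_1 > m_2 > \cdots > m_r \ge 0$ and $r = S(N)$, and set $N_0 = 0$, $N_j := \sum_{i \le j} 2^{m_i}$. Since $N_{j-1}$ is a multiple of $2^{m_j}$ (as $m_i > m_j$ for $i < j$), the very definition of a digital $(t,s)$-sequence in base~$2$ guarantees that each block $\cP_j := \{\bsx_{N_{j-1}}, \ldots, \bsx_{N_j-1}\}$ is a digital $(t_5, m_j, s)$-net over $\FF_2$, where $t_5 = 5t + s\binom{5}{2}$ is the $t$-value inherited from the $5$-fold interlacing used to build $\cS_s$. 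Applying Theorem~\ref{thm4} to each block yields $2^{m_j} \LL_{2, 2^{m_j}}(\cP_j) \ll_s m_j^{(s-1)/2}$. The local discrepancy decomposes additively as $N \Delta_{\cS_s, N}(\bst) = \sum_{j=1}^r 2^{m_j} \Delta_{\cP_j}(\bst)$, so a direct triangle inequality already gives $N \LL_{2,N}(\cS_s) \ll S(N) (\log N)^{(s-1)/2}$; the real task is to sharpen the factor $S(N)$ to $\sqrt{S(N)}$.

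For this I would pass to the Walsh-series identity
\begin{equation*}
N^2 \LL_{2,N}^2(\cS_s) = \sum_{\bsk, \bsl \ne \bszero} r(\bsk, \bsl)\, S_{\bsk, N}\, \overline{S_{\bsl, N}}, \qquad S_{\bsk, N} := \sum_{n=0}^{N-1} \wal_{\bsk}(\bsx_n),
\end{equation*}
and exploit a crucial sparsity of the character sums. Over $\FF_2$ each block $\cP_j$ is a digital shift (by some $\bssigma_j \in [0,1)^s$ depending on $N_{j-1}$) of the digital net generated by the first $m_j$ columns of the infinite matrices underlying $\cS_s$. Hence by multiplicativity of Walsh characters under XOR and the character property \eqref{eq_char},
\begin{equation*}
S_{\bsk, N} = \sum_{j:\, \bsk \in \mathcal{D}^{(m_j)}} 2^{m_j}\, \wal_{\bsk}(\bssigma_j),
\end{equation*}
where $\mathcal{D}^{(m_j)}$ is the dual of the $j$-th truncated net. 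Since larger $m$ gives a more restrictive dual, we have the nesting $\mathcal{D}^{(m_1)} \subseteq \mathcal{D}^{(m_2)} \subseteq \cdots \subseteq \mathcal{D}^{(m_r)}$, so the set of indices $j$ with $\bsk \in \mathcal{D}^{(m_j)}$ is either empty or a suffix $\{j_0(\bsk), \ldots, r\}$; together with the strict decrease of the $m_j$ this yields the geometric bound $|S_{\bsk, N}| \le 2 \cdot 2^{m_{j_0(\bsk)}}$.

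Plugging these character-sum bounds into the Walsh identity and partitioning according to $j_0(\bsk)$, the diagonal ($\bsk = \bsl$) contribution becomes
\begin{equation*}
\sum_{j=1}^r 4 \cdot 2^{2 m_j} \sum_{\bsk \in \mathcal{D}^{(m_j)} \setminus \{\bszero\}} |r(\bsk, \bsk)| \;\ll_{s, t_5}\; \sum_{j=1}^r (m_j + 1)^{s-1} \le S(N) (\log N)^{s-1},
\end{equation*}
by the standard diagonal bound \eqref{bound_L2_diagonal} applied inside each truncated dual $\mathcal{D}^{(m_j)}$. Taking square roots gives exactly the claimed $\sqrt{S(N)}(\log N)^{(s-1)/2}/N$ estimate, and the second inequality in the theorem follows from $S(N) \le 1 + \log_2 N$. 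The main obstacle is the off-diagonal part of the Walsh identity, where $\bsk \ne \bsl$ and $\bsk \sim \bsl$: since \eqref{ineq_bound_rkl} only vanishes when $\bsk \not\sim \bsl$, the surviving near-diagonal terms $r(\bsk,\bsl)$ must be controlled via the refined $\mu_5$-version of \eqref{ineq_bound_rkl} available for $\mathscr{D}_5^s(\widetilde{\cS}_{5s})$, and then summed over near-diagonal pairs $(\bsk,\bsl)$ in such a way that the resulting bound matches the diagonal one up to constants. This is the technically delicate step, and it is precisely where the choice of interlacing factor $\alpha = 5$ (rather than the minimal $\alpha = 3$ permitted by Theorem~\ref{thm_dp2}) is essential to absorb the extra decay needed on the off-diagonal.
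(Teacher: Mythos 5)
Your setup — splitting $N$ into dyadic blocks $\cP_j$ of size $2^{m_j}$, recognizing each block as a digitally shifted version of the net generated by the first $m_j$ columns, exploiting the nesting $\mathcal{D}^{(m_1)} \subseteq \cdots \subseteq \mathcal{D}^{(m_r)}$ to get $|S_{\bsk,N}| \le 2\cdot 2^{m_{j_0(\bsk)}}$, and feeding this into the Walsh/Parseval identity — is indeed the route taken in the cited work \cite{DP13} (the survey itself gives no proof of Theorem~\ref{thm3}, only the one-line remark that it extends the Walsh-series proof for nets). Your diagonal computation is correct and does produce the factor $S(N)(\log N)^{s-1}$ before taking square roots. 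Two small inaccuracies in the preliminaries: Theorem~\ref{thm4} cannot literally be "applied to each block", since the blocks are digitally \emph{shifted} interlaced nets and the $\LL_2$ discrepancy is not shift-invariant (this only matters for your throwaway triangle-inequality remark, since in the Walsh framework the shift contributes only a sign $\wal_{\bsk}(\bssigma_j)=\pm1$); and the identity should be stated with the usual care about the $\bsk=\bszero$ boundary terms.

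The genuine gap is that you never prove the off-diagonal estimate, and that estimate is the entire substance of the theorem. You need, concretely, a bound of the form $\sum_{\bsk\neq\bsl,\,\bsk\sim\bsl} |r(\bsk,\bsl)|\,|S_{\bsk,N}|\,|S_{\bsl,N}| \ll_s S(N)(\log N)^{s-1}$, where now $\bsk$ and $\bsl$ may lie in \emph{different} truncated duals $\mathcal{D}^{(m_j)}$ and $\mathcal{D}^{(m_{j'})}$, so the two character sums carry different weights $2^{m_j}$, $2^{m_{j'}}$. Establishing this requires combining the decay $|r(\bsk,\bsl)| \ll 2^{-\max(\mu_2(\bsk),\mu_2(\bsl))}$ with the higher-order property $\min_{\bsk\in\mathcal{D}^{(m)}\setminus\{\bszero\}}\mu_5(\bsk)\ge 5m-t_5$ of each truncated dual, and then counting, level by level and weight by weight, the near-diagonal pairs $(\bsk,\bsl)$ so that the double sum over levels $j,j'$ still contributes only $S(N)(\log N)^{s-1}$ rather than, say, $S(N)^2(\log N)^{s-1}$. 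This counting — and the verification that interlacing factor $5$ supplies exactly the extra decay needed to absorb the sums over the "similar" indices $\bsl\sim\bsk$ at mismatched levels — is the technically hard core of the proof in \cite{DP13}; your proposal names it as "the main obstacle" and asserts it "must be controlled", but does not carry it out or even state the precise counting lemma it would rest on. As written, the argument proves only the diagonal half, so the claimed bound $\sqrt{S(N)}(\log N)^{(s-1)/2}/N$ does not yet follow.
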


The proof of this result is an extension of the proof for digital nets  based on a Walsh series representation of the $\LL_2$ discrepancy of digital nets.

\section{Extensions to the $\LL_q$ discrepancy}\label{sec_lq}


Both approaches, the one by Chen and Skriganov~\cite{CS02} and the one from \cite{DP13}, can be extended to the $\LL_q$ discrepancy for $q > 2$. In this case one has Walsh coefficients $r(\bsk_1, \bsk_2,\ldots, \bsk_q)$ in \eqref{eq_L2_dual} where $\bsk_1, \bsk_2, \ldots, \bsk_q \in \mathcal{D}$. The strategies from Chen and Skriganov~\cite{CS02} and \cite{DP13} can be extended to also obtain optimal bounds in this situation. 

The first method by Chen and Skriganov~\cite{CS02} has been extended by Skriganov~\cite{Skr}.  In order to obtain the quasi-orthogonality in this case, one needs that the Hamming weight is at least $q s+1$ to ensure that the nondiagonal terms are $0$. Since again $r(\bsk_1, \ldots, \bsk_q) = \prod_{j=1}^s r(k_{j,1}, \ldots, k_{j,s})$, we obtain that there is a coordinate for which the weight is at least $q+1$, which then guarantees that $r(k_{j,1}, \ldots, k_{j,s}) = 0$. The result which one obtains in this case is the following: for each $N, s \in \mathbb{N}$ and even integer $q$ there exists a point set $\cP_{N,s,q} \subset [0,1)^s$ consisting of $N$ points such that
\begin{equation}\label{result_Skr}
\LL_{q,N}(\cP_{N,s,q}) \ll_{s,q} \frac{(\log N)^{\frac{s-1}{2}}}{N},
\end{equation}
where the implied constant does not depend on $N$. We point out that Skriganov's construction~\cite{Skr} (which is the same as in \cite{CS02} albeit with different parameters) the prime base $b$ of the digital net needs to satisfy $b > qs^2 $. Thus the point set changes for each $q$ and one cannot directly consider the case $q \to \infty$.

The approach from \cite{DP13} has been extended in \cite{D13}. In this case, the metric $\mu_2$ is sufficient to obtain the required bound for the nondiagonal terms. The result is similar to \cite{Skr}, however, in this case the point set does not depend on $q$:  for each $N, s \in \mathbb{N}$ there exists a point set $\cP_{N,s} \subset [0,1)^s$ consisting of $N$ points such that
\begin{equation}\label{result_D13}
\LL_{q,N}(\cP_{N,s}) \ll_{s,q}  \frac{(\log N)^{\frac{s-1}{2}}}{N} \quad \mbox{for all } 2 \le q < \infty,
\end{equation}
where the implied constants do not depend on $N$.

The proofs of \eqref{result_Skr} and \eqref{result_D13} use the Littlewood-Paley inequality for the Walsh function system. The complexity of the analysis greatly increases when studying the $\LL_q$ discrepancy for even integers $q > 2$ compared to the $\LL_2$ discrepancy, thus a direct approach to obtain bounds on the $\LL_q$ discrepancy for $q > 2$ seems difficult. The essential technical tool for the analysis is \cite[Lemma~4.2]{Skr}. This result was also used in \cite{D13} to prove the bound on the $\LL_q$ discrepancy.

The paper \cite{D13} also discusses upper bounds on the $\LL_q$ discrepancy for the digital sequences over $\mathbb{F}_2$ discussed above. Therein it is shown that the explicit construction of the digital sequence $\mathcal{S}_s$ in $[0,1)^{s}$, which was introduced in Theorem~\ref{thm3}, has $\LL_q$ discrepancy bounded by
\begin{equation}\label{Lq_seq}
\mathcal{L}_{q,N}(\cS_s) \ll_{s,q} \frac{r^{\frac{3}{2}- \frac{1}{q} } }{N} \sqrt{ \sum_{v=1}^r m_v^{s-1} }
\end{equation}
for all $N = 2^{m_1} + 2^{m_2} + \cdots + 2^{m_r} \ge 2$ and $2 \le q < \infty$, where the implied constant is independent of $N$. If $N$ has a bounded number of nonzero digits (for instance $N = 2^m$), then \eqref{Lq_seq} yields
\begin{equation*}
\mathcal{L}_{q,N}(\cS_s) \ll_{s,q} \frac{(\log N)^{\frac{s-1}{2}}}{N},
\end{equation*}
which is optimal by the lower bound of Roth~\cite{Roth}. However, in general, the bound does not match Proinov's lower bound \cite{pro86}. In fact, for general $N \in \mathbb{N}$, $N \ge 2$, we only have
\begin{equation*}
\mathcal{L}_{q,N}(\cS_s) \ll_{s,q} \frac{(\log N)^{\frac{s+3}{2} - \frac{1}{q} }}{N},
\end{equation*}
which is the best one can get from \eqref{Lq_seq} for $N = 1 + 2 + 2^2 + \cdots + 2^m =  2^{m+1}-1$. It is however suggested that \eqref{Lq_seq} can be improved to match Proinov's lower bound for arbitrary $N$.

\section{Extensions to Orlicz norms of the discrepancy function}\label{sec_orlicz}

Exponential Orlicz norms of the discrepancy function were studied by Bilyk et al.~\cite{BLIV} in dimension $2$ and by Skriganov~\cite{Skr12} and Amirkhanyan, Bilyk and Lacey~\cite{ABL} in arbitrary dimension. An equivalent definition of the exponential Orlicz norm is
\begin{equation}\label{Orlicz_eq}
\|f\|_{\exp(\LL^\alpha)} \simeq \sup_{q > 1} q^{-1/\alpha} \|f\|_{\LL_q}.
\end{equation}
Thus studying the exponential Orlicz norm is equivalent to studying the dependence of the constant appearing in the $\LL_q$ discrepancy bounds on $q$. In Bilyk et al.~\cite{BLIV} matching upper and lower bounds have been obtained in dimension $s=2$ which are of the following form: For all point sets $\cP_{N,2} \subset [0,1)^2$ consisting of $N$ points we have
\begin{equation*}
\|\Delta_{\cP_{N,s}}\|_{\exp(\LL^\alpha)} \gg \frac{(\log N)^{1-1/\alpha}}{N} \ \ \mbox{ for }\ \  2 \le \alpha < \infty
\end{equation*}
and for all $m \in \mathbb{N}$ there exists a point set $\cP_{2^m,2} \subset [0,1)^2$ consisting of $2^m$ points, such that
\begin{equation*}
\|\Delta_{\cP_{N,s}}\|_{\exp(\LL^\alpha)} \ll \frac{m^{1-1/\alpha}}{2^m} \ \ \mbox{ for } \ \  2 \le \alpha < \infty.
\end{equation*}

In \cite{ABL} and \cite{Skr12} exponential Orlicz norms in arbitrary dimensions $s \in \NN$ were studied (in \cite{Skr12} the author studied the dependence of the constant on $q$, which is equivalent to the Orlicz norm via \eqref{Orlicz_eq}). The authors considered randomly shifted digital nets in base $2$ (based for instance on Sobol' sequences). The result from both papers states that there exists a digitally shifted digital net $\cP_{N,s}$ such that
\begin{equation}\label{orlicz1}
\|\Delta_{\cP_{N,s}}\|_{\exp(\LL^{\frac{2}{s + 1}})} \ll_s \frac{m^{\frac{s-1}{2}}}{2^m}.
\end{equation}
(Note that a digital shift satisfying this bound is not known, so the construction is not explicit.) In both papers it is suggested that the result can be improved to
\begin{equation}\label{orlicz2}
\|\Delta_{\cP_{N,s}}\|_{\exp(\LL^{\frac{2}{s -1 1}})} \ll_s \frac{m^{\frac{s-1}{2}}}{2^m}.
\end{equation}
To put this result into context, we note that \eqref{orlicz1} is consistent with the star-discrepancy estimate
\begin{equation*}
\LL_\infty(\cP_{N,s}) \ll_s \frac{(\log N)^s}{N},
\end{equation*}
which is weaker than well known star-discrepancy estimates for digital nets of the form $N^{-1} (\log N)^{s-1}$ (see for instance \cite[Chapter~4]{niesiam}). The conjecture \eqref{orlicz2} on the other hand is consistent with the star-discrepancy estimate $$\LL_\infty(\cP_{N,s}) \ll_s \frac{(\log N)^{s-1}}{N}.$$ However such star-discrepancy estimates are well known and hold for any digitally shifted digital net with suitable $t$ parameter, see \cite[Chapter~4]{niesiam}.

\end{document}